\providecommand{\U}[1]{\protect\rule{.1in}{.1in}}
\newtheorem{theorem}{Theorem}
\newtheorem{definition}[theorem]{Definition}
\newtheorem{example}[theorem]{Example}
\newtheorem{lemma}[theorem]{Lemma}
\newenvironment{proof}[1][Proof]{\noindent\textbf{#1.} }{\ \rule{0.5em}{0.5em}}
\begin{document}

\title{Recovering a Gaussian distribution from its minimum.}

\author[1,2]{Ricardo Restrepo \thanks{ricardo.restrepo@udea.edu.co}}
\author[2]{Carlos Marin}
\author[1,2]{Jose Solano}
\affil[1]{Departamento de Matematicas, Universidad de Antioquia}
\affil[2]{Research Division, Math Decision}



%

\maketitle


\begin{abstract}
Let $X=(X_1,X_2, X_3)$ be a Gaussian random vector such that $X\sim \mathcal{N} (0,\Sigma)$.  We consider the problem of determining the matrix $\Sigma$, up to permutation, based on the knowledge of the distribution of $X_{\mathrm{min}}:=\min(X_1, X_2, X_3)$. Particularly, we establish a connection between this identification problem and a geometric identification problem in the context of the theory of the circular radon transform.
\end{abstract}

Keywords: Identifiability, circular radon transform, competing risk.

\section{Introduction}

A {\em{Gaussian random vector\/}} is a vector valued random variable $X=(X_1,\dots, X_n)$ whose components $X_1,\dots , X_n$ are jointly Gaussian. The {\em{mean vector\/}} $\mu$, of a Gaussian random vector $X=(X_1,\dots, X_n)$ is defined by $\mu=(\mu_1,\dots, \mu_n)^T$, where $\mu_i= E(X_i)$ for $i = 1, \ldots, n$. Moreover, the {\em{correlation matrix\/}} $\Sigma$ of $X$ is an $n\times n$ matrix defined by $\Sigma:=(\Sigma_{ij})$, where $\Sigma_{ij}=\mathrm{Cov}(X_i,X_j)$ for $1\le i,j \le n$.
The statement ``$X$ is a Gaussian random vector with mean vector $\mu$ and correlation matrix $\Sigma$''  will be compactly written as $X\sim \mathcal{N} (\mu,\Sigma)$.

Suppose that $X=(X_1,\dots, X_n) \sim \mathcal{N} (0,\Sigma)$ and let $X_{\mathrm{min}}$ be the random variable defined by $X_{\mathrm{min}}= {\mathrm{min}} (X_1,\dots, X_n)$.
It is clear that the knowledge of $\Sigma$ allow us to determine the distribution of $X_{\mathrm{min}}$.
On the other hand, a natural question arises: Does the distribution of $X_{\mathrm{min}}$ determine the matrix $\Sigma$? In other words, is it possible to recover the matrix $\Sigma$ from the distribution of $X_{\mathrm{min}}$? The purpose of this paper is to solve this problem affirmatively, under the assumption that
$\boldsymbol{1}^t\Sigma^{-1}>0$, which in particular covers the case where the
correlations are negative \cite{DM06, DM07}.




This problem seems to have originated from an econometrics supply-demand problem posed to Anderson and Ghurye \cite{AG78}. Additionally, a similar set of problems were previously known in the context of competing and complimentary risks \cite{T75, BG78, BG80}. These kind of problems of identification have been the subject of interest to many authors, including \cite{AG78, EH0, EH01, DM01, DM07, EM99, MS90, R92}.  Particularly, in the same setting established above, the authors in \cite{BG78} studied this recovery problem under the hypothesis $\rho_{ij}\sigma_i < \sigma_j$. In \cite{EM99, DM01} it was studied the case of common correlations. And, in \cite{DM07, DM06} it was studied the case of non-negative correlations. The novelty of our approach consists in tackling the problem via a geometric approach, reducing it to a recovering problem in the context of circular radon transform. This allows us to significantly extend previous results. Moreover, from this approach much intuition is gained regarding the `backstage' geometric difficulty implicit in this and similar recovery problems, particularly improving previous results established in \cite{DM06} and \cite{DM07}.


This paper is organized as follows: Section \ref{sec:preliminares} is devoted to develop some preliminary geoemetric notions. Section \ref{sec:howrecoverSigma} contains the proof of the main result of the article, while Section \ref{sec:recoveringthetriangle} presents in detail the proof of Lemma \ref{lem:geo} which is a key component in the proof of the main theorem.


\section{Preliminaries}\label{sec:preliminares}

\subsection{Generalized square roots}\label{subset:squareroots}

\begin{definition}
\label{def:sqr}
We say that $N$ is a {\em{generalized square root\/}}  of a matrix $M$ (or, for the purpose of this article, just a \emph{square root} of $M$) if $NN^t=M$.
\end{definition}

Now, if $M$ is an $n\times n$ positive definite symmetric matrix, it has an eigendecomposition $P D P^t$, where $P$ is a unitary complex matrix and $D$ is a real diagonal matrix whose main diagonal contains the corresponding positive eigenvalues. Thus, the matrix $N=P D^{\frac{1}{2}}$ is a generalized square root of $M$, (where $D^{\frac{1}{2}}$ is a square root of $D$ in the `usual' sense).

Note that, given an orthogonal matrix $O$ and any square root $N$ of $M$, the matrix $NO$ is also a square root of $M$. Moreover, each square root for $M$ can be obtained by right multiplication of $N$ with an orthogonal matrix. In other words, the orthogonal group acts transitively by right multiplication on the set of square roots of $M$. 

Notice that a square root $N$ of a positive definite matrix $M$ is a natural change of variables between the Hilbert space induced by the internal product
$\left\langle u,v\right\rangle _{M}:=u^{t} M v$ and the Euclidean
Hilbert space endowed with the usual internal product $\left\langle u,v\right\rangle :=u^{t}v$,
in the sense that $\left\langle u,v\right\rangle _{M}=\left\langle
Nu,Nv\right\rangle $. A particular use of this fact appears when we consider a Gaussian random vector $X\sim
\mathcal{N}\left(0,\Sigma\right)$. In this case we have that $X\overset{\mathcal{D}}{=} NU$, where $N$ is any square root of $\Sigma$ and $U\sim \mathcal{N}\left(0,I\right)$.

We will impose an additional condition on the square roots of $\Sigma$:
If a square root $N$ of $\Sigma$ satisfies that $Ne_1$ is a positive multiple of the vector $\mathbf{1}=(1,\dots,1)^t$, then $N$ will be called a {\em{standard square root\/}} of $\Sigma$. Notice that, if this is the case,
$Ne_{1}=\kappa^{-1}\mathbf{1},$ with $\kappa:=\sqrt{\mathbf{1}^{t}
	\Sigma^{-1}\mathbf{1}}$.


\subsection{Cones and sections}\label{subset:conesandsections}
Let $A$ be an $n\times n$ non-singular matrix. The {\emph{positive cone\/}} associated with $A$ is defined as the set
$C_{A}:=\{u\in \mathbb{R}^n:Au\geq0\}.$\footnote{$\leq$ denotes the componentwise order, that is for a given vectors $v =(v_1,\dots, v_n)^t, w =(w_1,\dots, w_n)^t \in \mathbb{R}^n$,  $v\geq w$ sii $v_i \geq w_i$, for each $i: 1,\dots, n$.} A different representation for $C_A$ is given
by%
\[
C_{A}=\left\{\sum_{i=1}^n c_{i}v_{i}: c_{i}\geq0 \mbox{ for }  i= 1,\dots, n \right\}\text{,}%
\]
where, for $i=1, \ldots, n$, the vector $v_{i}$ is a positive multiple of $A^{-1}e_{i}$. The vectors $v_1,\dots, v_n$ are collectively called
the \emph{directions} of $C_A$.

In the following let us assume that $\Sigma^{-1}\boldsymbol{1}>0$ and let $N$ be a standard square root of $\Sigma$. Notice that
 $C_{N}$ (except for the origin) is completely contained in the half space $\{w\in \mathbb{R}^3:e_{1}^{t}w >0\}$. Indeed, if $0\ne u \in C_N$, then
\[
e_{1}^{t}u= \lambda \boldsymbol{1}^{t}\left(N^{-1}\right)^{t} N^{-1} Nu=\lambda \boldsymbol{1}^{t}%
\Sigma^{-1}Nu = \lambda (\Sigma ^{-1} \boldsymbol{1})^t Nu>0.
\]


In particular, since $e_1 \in C_N$, the requirement $\Sigma^{-1}\boldsymbol{1}>0$ allows us to choose the directions of $C_N$ in a such way that they have the form
$v_i=(1,\overline{\alpha_{i}})$, $i=1,\dots,n$. In such a case, the vectors $\overline{\alpha_{i}}$ correspond to the extremal points of the intersection of $C_{N}$ with the plane $
x_{1}=1$. We will denote by $T_N$ the convex set generated by the points $\overline{\alpha_{i}} \in\mathbb{R}^{n-1}$ and we will call it the {\em{section associated\/}} with the standard root $N$ of $\Sigma$.


\section{How to recover $\Sigma$ from the distribution of $X_{\mathrm{min}}$?}\label{sec:howrecoverSigma}

\subsection{Reducing the problem}\label{subset:reducingtheproblem}
Given a Gaussian random vector $X=(X_1,X_2, X_3) \sim \mathcal{N} (0,\Sigma)$,
we define $m_{\Sigma}$ as the tail distribution of
$X_{\mathrm{min}}=\min(X_1,X_2,X_3)$:
\[
m_{\Sigma}(t):=\operatorname*{Prob}\left( X_{\mathrm{min}}
\geq t\right),
\]
In the following, we will assume that $\Sigma^{-1}\boldsymbol{1}>0$.

\begin{lemma}
\label{pro:twoparameters}
Let $N$ be a standard square root of $\Sigma$. Then $\Sigma$ is uniquely
determined (up to a permutation), from the vertices of the
section $T_{N}$ and the parameter
$\kappa:=\sqrt{\boldsymbol{1}^{t}\Sigma^{-1}\boldsymbol{1}}$.
\end{lemma}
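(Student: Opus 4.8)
The plan is to turn the geometric data $(T_N,\kappa)$ directly into the matrix $N^{-1}$, from which $\Sigma$ follows immediately through $\Sigma^{-1}=(N^{-1})^{t}N^{-1}$. The starting observation is that the directions of $C_N$ are precisely the columns of $N^{-1}$ up to positive scaling: by definition each $v_i$ is a positive multiple of $N^{-1}e_i$, so writing $N^{-1}e_i=\lambda_i v_i$ with $\lambda_i>0$ (positivity follows because $C_N$ sits in the half-space $e_1^{t}w>0$, forcing the first coordinate of $N^{-1}e_i$ to be positive) yields the factorization $N^{-1}=V\Lambda$, where $V$ has columns $v_i=(1,\overline{\alpha_i})$ and $\Lambda=\mathrm{diag}(\lambda_1,\lambda_2,\lambda_3)$. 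The vertices of $T_N$ supply $V$ up to the order in which we list them, which is exactly the source of the ``up to permutation'' clause, so the only missing ingredient is the diagonal matrix $\Lambda$.

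First I would recover $\Lambda$ using $\kappa$. The standard-root condition $Ne_1=\kappa^{-1}\boldsymbol{1}$ is equivalent to $N^{-1}\boldsymbol{1}=\kappa e_1$, and substituting $N^{-1}=V\Lambda$ gives $\sum_i \lambda_i v_i=\kappa e_1$. Reading off the first coordinate produces $\sum_i\lambda_i=\kappa$, while the remaining coordinates produce $\sum_i\lambda_i\,\overline{\alpha_i}=0$. Equivalently, the normalized weights $\mu_i:=\lambda_i/\kappa$ are the barycentric coordinates of the origin with respect to the triangle $T_N$.

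The key step is that these weights are uniquely determined. Since $N^{-1}$ is invertible its columns $v_i=(1,\overline{\alpha_i})$ are linearly independent, which is precisely the statement that the three points $\overline{\alpha_i}$ are affinely independent; hence $T_N$ is a non-degenerate triangle and the barycentric coordinates of any point (in particular the origin, which lies in $T_N$ because $e_1\in C_N$) exist and are unique. Solving this $3\times 3$ system determines the $\mu_i$, and then $\lambda_i=\kappa\mu_i$ recovers $\Lambda$.

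Finally, assembling $N^{-1}=V\Lambda$ gives $\Sigma^{-1}=(N^{-1})^{t}N^{-1}=\Lambda V^{t}V\Lambda$, that is, $\Sigma=\Lambda^{-1}(V^{t}V)^{-1}\Lambda^{-1}$, which is the desired reconstruction; a short consistency check confirms $\boldsymbol{1}^{t}\Sigma^{-1}\boldsymbol{1}=\kappa^{2}$. I expect the only genuine obstacle to be the bookkeeping around non-degeneracy and positivity---verifying that the $\lambda_i$ emerging from the barycentric system are truly positive (so that $V\Lambda$ really is $N^{-1}$ and not some other scaling) and that the vertices of $T_N$ are exactly the three points $\overline{\alpha_i}$, with none of them absorbed into the convex hull of the others. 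Both reduce to the affine independence already guaranteed by the invertibility of $N^{-1}$.
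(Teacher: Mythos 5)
Your proposal is correct and follows essentially the same route as the paper's proof: both identify the vertices of $T_N$ (lifted to vectors $(1,\overline{\alpha_i})$) with the columns of $N^{-1}$ up to positive scaling and permutation, recover the scaling factors as the unique solution of the linear system $N^{-1}\boldsymbol{1}=\kappa e_{1}$ (your barycentric-coordinate reading of $\lambda=\kappa V^{-1}e_1$ is exactly the paper's $\left[\mu_1,\mu_2,\mu_3\right]^{t}=\kappa W^{-1}e_{1}$), and then assemble $\Sigma$ from $N^{-1}=V\Lambda$, with the permutation ambiguity accounting for the ``up to permutation'' clause. The only cosmetic difference is that the paper tracks the permutation explicitly via a matrix $P$ with $N=P^{t}\Lambda^{-1}W^{-1}$, whereas you absorb it into the choice of ordering of the vertices; both are sound.
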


\begin{proof}
Let $w_i=(1,\alpha_i,\beta_i), i=1,2,3$ be the
vertices of the triangle $T_N$ and let $W$ be the matrix with columns
$w_{1},w_{2},w_{3}$. Define $\mu_{1},\mu_{2},\mu_{3}$ by the relation
$\left[  \mu_{1},\mu_{2},\mu_{3}\right] ^{t}:=\kappa W^{-1}e_{1}$ and
let ${\Lambda:={\rm{diag}}(\mu_{1}, \mu_{2}, \mu_{3})}$. We assert
that there exists a permutation matrix $P$ such that
\[
N=P^{t}\Lambda^{-1}W^{-1}.
\]
From this statement the lemma would clearly follow since we would have that
\[\Sigma=P^{t}\Lambda
^{-1}W^{-1}\left(  W^{-1}\right)  ^{t}\left(  \Lambda^{-1}\right)
^{t}P.
\]
In order to prove the assertion notice that, up to ordering and scaling,
the vectors $w_{1},w_{2},w_{3}$ are equal to the directions
$N^{-1}e_{1},N^{-1}e_{2},N^{-1}e_{3}$, thus
\[
(N^{-1}e_{1},N^{-1}e_{2},N^{-1}e_{3})=(\lambda_{\sigma(1)}w_{\sigma(1)}
,\lambda_{\sigma(2)}w_{\sigma(2)},\lambda_{\sigma(3)}w_{\sigma(3)})\]
for some permutation $\sigma$ and some $\lambda_{1},\lambda_{2},\lambda_{3}>0$.
This is equivalent to saying that $W\Lambda' P=N^{-1}$, where $P$ is the matrix permutation
associated to $\sigma$  and $\Lambda':= \mbox{diag}(\lambda_{1},\lambda_{2},\lambda_{3})$.

Consequently,
\[
W\left[  \lambda_{1},\lambda_{2},\lambda_{3}\right]  ^{t}=W\Lambda' 1=W\Lambda' P1=N^{-1}\boldsymbol{1}
=\kappa e_{1}.
\]
Thus $\left[  \lambda_{1},\lambda_{2},\lambda_{3}\right]  ^{t}:=\kappa W^{-1}e_{1}$,
implying that $\Lambda = \Lambda'$  from where the assertion follows.
\end{proof}
\vspace{1cm}

Let $N$ be a standard root of $\Sigma$. Notice that for any $u\in \mathbb{R}^3$  the condition $Nu\ge t\boldsymbol{1}$ is equivalent to ${N(u-t\kappa e_1) \ge 0}$, and therefore $\{u:Nu\geq t1\}=t\kappa e_{1}+C_{N}$. Then, we have that for $t>0$,
\begin{equation}\label{eq:geom}
m_{\Sigma}\left(  t\right)  =\operatorname*{Prob}\left(  X_{\min}  \geq t\right) =\mu\left(  t\kappa e_{1}+C_{N}\right),
\end{equation}
where $\mu$ is the standard Gaussian measure $\mathcal{N}\left(  0,I\right)
$ in $\mathbb{R}^{3}$. Moreover, if $\mu_{\sigma}$ stands for the Gaussian
measure $\mathcal{N}\left(  0,\sigma^{2}I\right)  $, it is the case that
\begin{equation}
m_{\Sigma}\left(  t\right)  =\mu_{1/(t\kappa)}\left(e_{1}+C_{N}\right)
\text{.} \label{eq:geom2}%
\end{equation}


\begin{lemma}
\label{pro:kappa}
As $t\to \infty$, $\ln m_{\Sigma}\left(  t\right) \sim -\frac
{t^{2}\kappa^{2}}{2}$. In particular,
\begin{equation}
\kappa^{2}=-\frac{1}{2}\lim_{t\rightarrow+\infty}\frac{\ln m_{\Sigma}\left(
t\right)  }{t^{2}}\text{.} \label{eq:kappa}%
\end{equation}
\end{lemma}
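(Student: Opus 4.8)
The plan is to extract the asymptotics of $m_\Sigma$ directly from the geometry in \eqref{eq:geom}. Writing $\mu=\mathcal{N}(0,I)$ and $S_t:=t\kappa e_1+C_N$, we have $m_\Sigma(t)=\mu(S_t)$, and for a standard Gaussian the logarithm of the mass of a far set is governed by the squared distance of that set to the origin. So the first step is to compute $d(t):=\mathrm{dist}(0,S_t)$. Using that every $u\in C_N$ satisfies $e_1^t u\ge 0$ (the inequality established in Section~\ref{sec:preliminares}, and strict for $u\ne 0$), for any $u\in C_N$ we get $|t\kappa e_1+u|^2=t^2\kappa^2+2t\kappa\,e_1^t u+|u|^2\ge t^2\kappa^2$, with equality only at the apex $u=0$. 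Hence $d(t)=t\kappa$, and I expect $\ln m_\Sigma(t)\sim-\tfrac12 d(t)^2=-\tfrac12 t^2\kappa^2$.

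To turn this into a rigorous two-sided bound I would sandwich $\mu(S_t)$. For the upper bound, the same inequality $e_1^t u\ge 0$ gives $S_t\subseteq\{x:x_1\ge t\kappa\}$, so $m_\Sigma(t)\le\mathbb{P}(Z\ge t\kappa)$ for a standard normal $Z$, and the classical tail estimate $\ln\mathbb{P}(Z\ge s)\sim-\tfrac12 s^2$ yields $\ln m_\Sigma(t)\le-\tfrac12 t^2\kappa^2(1+o(1))$. For the lower bound I would exploit that $e_1$ lies in the \emph{interior} of $C_N$: since $Ne_1=\kappa^{-1}\mathbf{1}$ has strictly positive entries, $B(e_1,r)\subseteq C_N$ for some $r>0$, and as $C_N$ is a cone, $B(\lambda e_1,\lambda r)\subseteq C_N$ for every $\lambda>0$. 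Consequently $S_t$ contains the ball $B\big((t\kappa+\lambda)e_1,\,\lambda r\big)$, every point of which has norm at most $t\kappa+\lambda(1+r)$; bounding the Gaussian density from below on this ball gives
\[
m_\Sigma(t)\ \ge\ (2\pi)^{-3/2}\,\mathrm{vol}\big(B(0,\lambda r)\big)\,\exp\!\Big(-\tfrac12\big(t\kappa+\lambda(1+r)\big)^2\Big).
\]

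Taking $\lambda=1/t$ makes the cross term $t\kappa\lambda(1+r)=O(1)$ and the log-volume term $3\ln(\lambda r)=-3\ln t+O(1)$ both negligible compared with $t^2$, so $\ln m_\Sigma(t)\ge-\tfrac12 t^2\kappa^2+o(t^2)$. Combining the two bounds yields $\ln m_\Sigma(t)\sim-\tfrac12 t^2\kappa^2$; dividing by $t^2$ and solving for $\kappa^2$ then gives the limit formula \eqref{eq:kappa}. (Equivalently, one may read $m_\Sigma(t)=\mu_{1/(t\kappa)}(e_1+C_N)$ from \eqref{eq:geom2} and view the statement as a Laplace-type asymptotic for the mass of the fixed cone $e_1+C_N$ under a Gaussian whose variance tends to $0$, the dominant contribution coming from the apex $e_1$ at distance $1$.) The routine parts are the distance computation and the upper bound; the step needing care is the lower bound, where the inscribed ball must be allowed to shrink toward the apex at a controlled rate so that neither the vanishing volume factor nor the displacement of the ball's center corrupts the leading exponential order $-\tfrac12 t^2\kappa^2$.
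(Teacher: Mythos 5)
Your proof is correct, but it takes a genuinely different route from the paper. The paper's proof is a direct appeal to large deviations theory: it cites the small-variance LDP for $\mu_\sigma=\mathcal{N}(0,\sigma^2 I)$ with rate function $I(u)=\lVert u\rVert^2/2$ (Varadhan), notes that $\inf_{u\in e_1+C_N}I(u)=I(e_1)=\tfrac12$, and reads off the asymptotic from $m_\Sigma(t)=\mu_{1/(t\kappa)}(e_1+C_N)$ as in \eqref{eq:geom2}. Your sandwich argument proves the same asymptotic from scratch: the key geometric fact you isolate --- that the distance from the origin to $t\kappa e_1+C_N$ is exactly $t\kappa$, attained at the apex --- is precisely the paper's statement that the infimum of the rate function over $e_1+C_N$ equals $I(e_1)$, and both rest on the half-space containment $C_N\subseteq\{e_1^t u\ge 0\}$ from Section \ref{sec:preliminares}. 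What the paper's route buys is brevity, at the cost of invoking LDP machinery; strictly speaking, the LDP lower bound applies to the \emph{interior} of a set, so equality of the two-sided asymptotic requires checking that $\inf_{\operatorname{int}(e_1+C_N)}I=\inf_{e_1+C_N}I$, a point the paper passes over silently. Your elementary lower bound --- the inscribed ball $B\bigl((t\kappa+\lambda)e_1,\lambda r\bigr)\subseteq t\kappa e_1+C_N$, available because $Ne_1=\kappa^{-1}\mathbf{1}>0$ puts $e_1$ in the interior of the cone --- handles exactly this boundary issue explicitly, so your proof is more self-contained and arguably more complete. One small simplification: you do not need the shrinking radius $\lambda=1/t$; a fixed $\lambda$ works just as well, since the resulting cross term is $O(t)=o(t^2)$, which is all the asymptotic $\ln m_\Sigma(t)\sim-\tfrac12 t^2\kappa^2$ requires.
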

\begin{proof}
The rate function of the sequence of measures $\mu_{\sigma}$, where
$\sigma\rightarrow0$, is given by $I\left(  u\right)  =\left\Vert u\right\Vert
^{2}/2$, (see \cite{V88}). Therefore,
\[\ln\mu_{\sigma}\left(  e_{1}
+C_{N}\right)  \sim -\sigma^{-2} \inf_{u\in
\left(  e_{1}+C_{N}\right)}I(u);
\]
On the other hand, it is clear that
\[
\inf_{u\in
\left(  e_{1}+C_{N}\right)}I(u)=I(e_{1}) =\frac{1}{2}.
\]
Therefore, as $t\to \infty$, we have that
\[
\ln\left(  m_{\Sigma}\left(  t\right)  \right) = \ln\mu_{1/t\kappa}\left(
e_{1}+C_{N}\right)  \sim-\frac{t^{2}\kappa^{2}}{2}\text{.}%
\]
\end{proof}

\vspace{1cm}
We define the (two-dimensional) {\em{circular transform\/}} of a function
$f:\mathbb{R}^{2}\rightarrow\mathbb{R}$ by
\[
R_{f}\left(  \rho\right)  :=
{\displaystyle\int\limits_{0}^{2\pi}}
f\left(  \rho\cos\theta,\rho\sin\theta\right)  d\theta\text{.}
\]
For a measurable set $S\subseteq\mathbb{R}^{2}$, we define the circular transform by
\[
R_{S}\left(  \rho\right)  :=R_{I\left(  \cdot\in S\right)  }\left(
\rho\right)  =
{\displaystyle\int\limits_{0}^{2\pi}}
I_S(\rho\cos\theta,\rho\sin\theta) d\theta\text{,\quad}\rho>0\text{,}%
\]
where $I_S$ denotes the characteristic function of $S$. Notice that, since the angular measure is invariant under orthogonal transformations, the circular transform is also invariant under orthogonal transformations. In other words if $O$ is an orthogonal transformation, it is the case that $R_{f\circ O}=R_f$.
On the other hand, if $R_f = R_g$ for given functions $f$, $g$, it is not necessarily the case that $f \circ O = g$ for some orthogonal transformation (see example \ref{ex:norec}). In spite of this, we have a positive result in this  direction when $f$ and $g$ correspond to characteristic functions of triangles enclosing the origin (that is, such that $0$ belongs to its interior).

\begin{lemma}
	\label{lem:geo}If $T$ is a triangle \textbf{enclosing the origin}, then $T$ can be recovered (up to an orthogonal transformation), from $R_{T}$.
\end{lemma}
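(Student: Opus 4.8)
The plan is to parametrize $T$ as the intersection of three half-planes and to read the geometry of $T$ off the singularities of $R_T$. Write each edge-line of $T$ as $\{x\in\mathbb{R}^2 : x\cdot(\cos\phi_i,\sin\phi_i)=d_i\}$, where $\phi_i$ is the direction of the outward normal and $d_i>0$ is its distance to the origin (positivity because the origin is interior). A point $\rho(\cos\theta,\sin\theta)$ lies in $T$ precisely when $\rho\cos(\theta-\phi_i)\le d_i$ for $i=1,2,3$; hence $\theta$ must avoid the three ``exit arcs'' $B_i(\rho):=(\phi_i-g_i(\rho),\phi_i+g_i(\rho))$, where $g_i(\rho)=\arccos(d_i/\rho)$ for $\rho\ge d_i$ and $B_i(\rho)=\varnothing$ otherwise. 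Consequently
\[
R_T(\rho)=2\pi-\Big|\bigcup_{i=1}^{3}B_i(\rho)\Big|,
\]
so that recovering $T$ up to an orthogonal transformation amounts to recovering the three normal directions $\phi_i$ (up to a global rotation and a reflection) together with the three distances $d_i$ from the single-variable profile $R_T$.

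First I would recover the coarse invariants directly from the graph of $R_T$: the threshold $d_{\min}=\inf\{\rho:R_T(\rho)<2\pi\}$ equals $\min_i d_i$ (the circle first pierces the nearest edge-line), and $\rho_{\max}=\sup\{\rho:R_T(\rho)>0\}$ equals the largest vertex distance. Next I would examine the singular set of $R_T$, i.e.\ the radii at which $R_T$ fails to be analytic. Since $g_i'(\rho)=d_i/(\rho\sqrt{\rho^2-d_i^2})$ blows up like $(\rho-d_i)^{-1/2}$ at $\rho=d_i^+$, each exit arc contributes an inverse-square-root cusp when it is born, and a similar cusp arises at each radius $r_{ij}$ where two arcs $B_i,B_j$ first overlap, which is exactly the distance from the origin to the vertex $v_{ij}$, the intersection of lines $i$ and $j$. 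Thus the singular radii split into ``birth'' values (a subset of the $d_i$) and ``collision'' values (the vertex radii $r_{ij}$), and the one-sided behavior of $R_T'$ near each singularity lets me classify them and, from the slope contributions $g_i'+g_j'$ feeding a collision, identify which two distances are about to merge there.

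The reconstruction is then algebraic. Solving the $2\times2$ intersection of lines $i$ and $j$ gives the key relation
\[
r_{ij}^{2}\,\sin^{2}(\phi_i-\phi_j)=d_i^{2}+d_j^{2}-2\,d_i d_j\cos(\phi_i-\phi_j),
\]
so each correctly matched triple $(d_i,d_j,r_{ij})$ determines the angle $|\phi_i-\phi_j|\in(0,\pi)$ between the corresponding normals. Because the origin is enclosed, the three pairwise angles are precisely the three consecutive gaps of $\phi_1,\phi_2,\phi_3$ around the circle and therefore sum to $2\pi$; this consistency condition both selects the correct matching among the finitely many possibilities and reconstructs the cyclic configuration of the $\phi_i$ uniquely up to rotation and reflection. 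Together with the recovered distances this pins down $T$ up to an orthogonal transformation.

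The main obstacle is the degenerate case in which the foot of the perpendicular from the origin to some edge-line falls outside the corresponding edge (an ``obtuse'' configuration). Then the exit arc $B_i$ is born already buried inside the union of the other two, so $d_i$ produces no visible birth cusp and fewer than six singular radii are observed; moreover two vertex radii or two distances may coincide, collapsing the clean picture above. Handling these cases---showing that the vertex relations together with the enclosing constraint still determine all unknowns, and that no two distinct enclosing triangles (modulo orthogonal transformations) can share the same profile $R_T$---is where the real work lies, and is presumably the content of the dedicated Section \ref{sec:recoveringthetriangle}. The necessity of the hypothesis that $T$ encloses the origin should be visible in Example \ref{ex:norec}, where the orthogonal invariance of the circular transform genuinely loses information.
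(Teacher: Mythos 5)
Your proposal is the ``scan for singularities'' strategy that the paper explicitly mentions at the opening of Section \ref{sec:recoveringthetriangle} --- and then deliberately abandons, precisely because of the obstacle you flag in your final paragraph. That gap is real and it is not a technicality: it is essentially the whole content of the lemma. When the foot of the perpendicular from the origin to an edge-line falls outside the corresponding edge (the paper's Case II), the distance $d_i$ to that line produces \emph{no} singularity of $R_T$ at all: the foot lies strictly inside another exit arc, so for $\rho$ slightly above $d_i$ the newborn arc $B_i(\rho)$ is entirely contained in the union of the other two, the union's measure is unchanged, and $R_T$ is smooth (indeed analytic) in a neighborhood of $d_i$. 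Your dictionary ``singular radii $=$ three feet plus three vertices'' therefore breaks down: one observes only five singular radii, $d_i$ is invisible, and the reconstruction becomes a classification-and-matching problem (which radii are feet, which are vertices, which foot pairs with which vertex, plus one unknown distance) whose unique solvability you do not establish. In Case III (foot landing on a vertex) two of the would-be singular radii coincide and the count degenerates further. Your own sentence conceding that handling these configurations ``is where the real work lies'' is accurate, and it means the proposal is a program, not a proof.

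For contrast, the paper's argument is built exactly so that the parameter hidden in Case II stays visible. It decomposes $I_T$ into a \emph{signed} sum of characteristic functions of six right subtriangles $\operatorname{conv}(0,\text{vertex},\text{foot})$, with coefficient $-1$ on the subtriangle cut off by the external foot (Eq. \eqref{eq:char2}); it proves via a Wronskian/Vandermonde computation that the circular transforms $\Phi_{a,b}$ of the basic right triangles $T(a,b)$ form a linearly independent family (Lemma \ref{lem:independent}); hence $R_T$ has a unique irreducible representation $\sum c_{a,b}\Phi_{a,b}$ (Lemma \ref{lem:span}), from which one reads off all six distances --- including the invisible $d_i$, which appears as the index of the $\Phi$ carrying the coefficient $-1$ --- distinguishes Cases I, II, III by whether $\sum c_{a,b}$ equals $6$, $4$ or $5$, and reassembles the parametric form via Lemma \ref{lem:pairs}. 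To salvage your route you would have to show that the full functional form of $R_T$ between its singularities (not merely the singular set) determines the missing distance and the correct matching uniquely; that is in effect what the linear-independence argument accomplishes in one stroke.
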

Due to its length, we relegate the proof of Lemma \ref{lem:geo} to Section \ref{sec:recoveringthetriangle}.
Although moderately technical, it relies on elementary geometry and basic linear algebra.
The condition that the triangles enclose the origin is necessary, as the following example shows:

\begin{example}
\label{ex:norec} Consider the triangles depicted in the figure below
\[
\begin{pspicture}(-6,-1)(2,3)
\psline{*-*}(-5,2)(-5,0)
\psline{*-*}(-5,2)(-3.5,0)
\psline{*-*}(-5,0)(-3.5,0)
\rput(-5,-0.3){\tiny{$(0,0)$}}
\rput(-3.4,-0.3){\tiny{$(3,0)$}}
\rput(-5,2.3){\tiny{$(0,4)$}}
\psline(-5,0)(-4.04,0.72)
\psarc[linestyle=dashed](-5,0){0.4}{0}{90}
\psarc[linestyle=dashed](-5,0){0.8}{0}{90}
\psarc[linestyle=dashed](-5,0){1.2}{0}{90}

\psline{*-*}(-2,1.5)(0,0)
\psline{*-*}(-2,1.5)(-2,0)
\psline{*-*}(-2,0)(0,0)
\rput(-2,-0.3){\tiny{$(0,0)$}}
\rput(0,-0.3){\tiny{$(4,0)$}}
\rput(-2,1.8){\tiny{$(0,3)$}}
\psline(-2,0)(-1.28,0.96)
\psarc[linestyle=dashed](-2,0){0.4}{0}{90}
\psarc[linestyle=dashed](-2,0){0.8}{0}{90}
\psarc[linestyle=dashed](-2,0){1.2}{0}{90}
\end{pspicture}
\]
It is clear that the triangles have the same circular transform. Therefore, for the triangles
\[
\begin{pspicture}(-6,-1)(2,3)
\psline{*-*}(-5,2)(-6.5,0)
\psline{*-*}(-5,2)(-3.5,0)
\psline{*-*}(-6.5,0)(-3.5,0)
\rput(-5,-0.3){\tiny{$(0,0)$}}
\rput(-3.4,-0.3){\tiny{$(3,0)$}}
\rput(-6.7,-0.3){\tiny{$(-3,0)$}}
\rput(-5,2.3){\tiny{$(0,4)$}}
\psline(-5,0)(-4.04,0.72)
\psarc[linestyle=dashed](-5,0){0.4}{0}{180}
\psarc[linestyle=dashed](-5,0){0.8}{0}{180}
\psarc[linestyle=dashed](-5,0){1.2}{0}{180}

\psline{*-*}(0,1.5)(2,0)
\psline{*-*}(0,1.5)(-2,0)
\psline{*-*}(-2,0)(2,0)
\rput(0,-0.3){\tiny{$(0,0)$}}
\rput(2,-0.3){\tiny{$(4,0)$}}
\rput(-2,-0.3){\tiny{$(4,0)$}}
\rput(0,1.8){\tiny{$(0,3)$}}
\psline(0,0)(0.72,0.96)
\psarc[linestyle=dashed](0,0){0.4}{0}{180}
\psarc[linestyle=dashed](0,0){0.8}{0}{180}
\psarc[linestyle=dashed](0,0){1.2}{0}{180}
\end{pspicture}
\]
the circular transform is also the same. However, it is clear that they are not orthogonally equivalent.
\end{example}

\subsection{The main Theorem}\label{subset:main theorem}
Now we are able to state and prove the main result of this article:
\begin{theorem}
\label{th:min}
Suppose that  $X=(X_1,X_2, X_3) \sim \mathcal{N} (0,\Sigma)$. Let $X_{\mathrm{min}}$  be the random variable defined by $X_{\mathrm{min}}= {\mathrm{min}} (X_1,X_2, X_3)$ and assume that $\boldsymbol{1}^{t}\Sigma^{-1}>0$. Then, the distribution of
$X_{\mathrm{min}}$ uniquely determines $\Sigma$ up to permutation
equivalence. More exactly, if $Y=(Y_1,Y_2, Y_3) \sim \mathcal{N} (0,\Sigma_0)$ and $X_{\mathrm{min}} \overset{\mathcal{D}}{=} Y_{\mathrm{min}}$, there exists a permutation matrix $P$ such that $\Sigma_{0}=P\Sigma
P^{t}$.
\end{theorem}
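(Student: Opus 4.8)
The plan is to decompose the recovery of $\Sigma$ into the three ingredients already prepared in the preliminaries: recover the scalar $\kappa$, recover the section $T_N$ up to an orthogonal transformation, and then reassemble $\Sigma$ via Lemma \ref{pro:twoparameters}. Throughout, the hypothesis $X_{\mathrm{min}}\overset{\mathcal{D}}{=}Y_{\mathrm{min}}$ will be used in the form $m_{\Sigma}=m_{\Sigma_0}$, and I write $N,N_0$ for standard square roots of $\Sigma,\Sigma_0$ with sections $T_N,T_{N_0}$.

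First I would extract $\kappa$. Since the tail $m_{\Sigma}$ is determined by the law of $X_{\mathrm{min}}$, Lemma \ref{pro:kappa} applied to both $\Sigma$ and $\Sigma_0$ gives $\kappa=\kappa_0$. This also guarantees that $t\kappa$ sweeps all of $(0,\infty)$ as $t$ does, which I need below.

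Second, and this is the technical core, I would show that $m_{\Sigma}$ determines the circular transform $R_{T_N}$. Starting from \eqref{eq:geom}, I slice the shifted cone $t\kappa e_1+C_N$ by the hyperplanes $\{x_1=\mathrm{const}\}$, each slice being a scaled copy $s\,T_N$ of the section, and rescale the transverse variable. Passing to polar coordinates in the $(x_2,x_3)$-plane turns the transverse Gaussian integral into $\int_0^\infty e^{-s^2\rho^2/2}\rho\,R_{T_N}(\rho)\,d\rho$; after interchanging the order of integration this yields
\[
(2\pi)^{3/2}\,m_{\Sigma}(t)=\int_0^\infty \rho\,R_{T_N}(\rho)\,K(t\kappa,\rho)\,d\rho,
\]
where $K(h,\rho):=\int_0^\infty s^2\exp\!\big(-(h+s)^2/2-s^2\rho^2/2\big)\,ds$. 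Thus $m_{\Sigma}$ is a fixed linear integral transform of $\rho\,R_{T_N}(\rho)$, and the crux is the injectivity of this transform. I would prove it by peeling off the Gaussian factor, writing $K(h,\rho)=e^{-h^2/2}\int_0^\infty s^2 e^{-hs}e^{-s^2(1+\rho^2)/2}\,ds$ and recognizing the $h$-dependence as a Laplace transform: if the transform vanishes for all $h$, Laplace injectivity forces $s^2 e^{-s^2/2}\int_0^\infty \rho\,R_{T_N}(\rho)e^{-s^2\rho^2/2}\,d\rho\equiv 0$, and a second Laplace inversion in $\tau=s^2/2$ (after the substitution $x=\rho^2$) forces $\rho\,R_{T_N}(\rho)\equiv 0$. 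Since $R_{T_N}$ is bounded by $2\pi$ and compactly supported (it vanishes beyond the largest vertex radius), the interchanges and both Laplace arguments are justified. Applying this to the difference $R_{T_N}-R_{T_{N_0}}$, which is legitimate because $\kappa=\kappa_0$ makes the kernel identical, gives $R_{T_N}=R_{T_{N_0}}$.

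Finally I would reassemble $\Sigma$. Because $\Sigma^{-1}\boldsymbol{1}>0$, the apex direction $e_1$ lies in the interior of $C_N$, indeed $Ne_1=\kappa^{-1}\boldsymbol{1}>0$, so the origin lies in the interior of $T_N$; hence $T_N$ encloses the origin and Lemma \ref{lem:geo} applies, yielding that $T_N$ and $T_{N_0}$ agree up to an orthogonal transformation of the $(x_2,x_3)$-plane. Such an orthogonal change corresponds exactly to replacing the standard square root $N$ by $NO$ with $O$ orthogonal and $Oe_1=e_1$, which leaves $\Sigma$ unchanged; therefore I may choose standard square roots for which the two sections are literally equal. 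With equal sections and the common value $\kappa$, Lemma \ref{pro:twoparameters} reconstructs the same matrix up to permutation from both, producing a permutation matrix $P$ with $\Sigma_0=P\Sigma P^t$. The main obstacle is the injectivity step above; the remaining work is the bookkeeping relating the orthogonal freedom in the square root to permutation equivalence, together with the geometry already packaged in Lemmas \ref{pro:twoparameters} and \ref{lem:geo}.
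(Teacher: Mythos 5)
Your proposal is correct and follows essentially the same route as the paper: recover $\kappa$ via Lemma \ref{pro:kappa}, show $m_{\Sigma}$ determines $R_{T_N}$ by slicing the shifted cone and applying Laplace-transform injectivity twice (the paper's Lemma \ref{pro:rad} does the identical computation, merely packaged as an explicit double inversion formula rather than an injectivity argument on the difference), then apply Lemma \ref{lem:geo} and absorb the orthogonal ambiguity into the choice of standard square root before invoking Lemma \ref{pro:twoparameters}. Your explicit verification that $T_N$ encloses the origin is a small point the paper leaves implicit, but the decomposition and key lemmas are the same.
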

Let us first prove the following lemma:
\begin{lemma}
\label{pro:rad} 
Let $N$ be a standard square root of $\Sigma$ (see definition \ref{def:sqr}) where $1^{t}\Sigma^{-1}>0$. Then $R_{T_{N}}$ is identifiable from $m_{\Sigma}\left(  t\right):=\Pr(X_{\mathrm{min}}\geq t)$. That is, $m_{\Sigma}\left(  t\right)$ uniquely determines $R_{T_N}$
\end{lemma}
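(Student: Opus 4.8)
The plan is to convert the probabilistic datum $m_{\Sigma}$ into a concrete integral over the section $T_N$ and then recover $R_{T_N}$ by reading off an asymptotic expansion. First I would invoke \eqref{eq:geom} together with Lemma \ref{pro:kappa} (so that $\kappa$ is already identifiable) to replace $m_{\Sigma}$ by the equivalent datum $g(\tau):=\mu(\tau e_{1}+C_{N})$, where $\tau=t\kappa>0$ and $\mu$ is the standard Gaussian on $\mathbb{R}^{3}$. Slicing the shifted cone by the planes $w_{1}=\tau+s$, $s\ge 0$, the slice is $\{\tau+s\}\times sT_{N}$, so
\[
g(\tau)=\frac{1}{(2\pi)^{3/2}}\int_{0}^{\infty}e^{-(\tau+s)^{2}/2}\Big(\int_{sT_{N}}e^{-|\bar u|^{2}/2}\,d\bar u\Big)\,ds.
\]
Here the hypothesis $\Sigma^{-1}\boldsymbol{1}>0$ is essential: it forces $e_{1}$ into the interior of $C_{N}$, hence the origin into the interior of $T_{N}$, so the radial function $r(\theta)$ of $T_{N}$ is well defined and bounded, $0<r_{\min}\le r(\theta)\le r_{\max}<\infty$. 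Polar coordinates give $\int_{sT_{N}}e^{-|\bar u|^{2}/2}\,d\bar u=2\pi-\int_{0}^{2\pi}e^{-s^{2}r(\theta)^{2}/2}\,d\theta$, whence
\[
g(\tau)=\bar\Phi(\tau)-\frac{1}{(2\pi)^{3/2}}\,h(\tau),\qquad h(\tau)=\int_{0}^{2\pi}K(\tau,r(\theta))\,d\theta,
\]
where $\bar\Phi$ is the standard normal tail and $K(\tau,a):=\int_{0}^{\infty}e^{-[(\tau+s)^{2}+a^{2}s^{2}]/2}\,ds$. Since $\bar\Phi$ is known, the datum $g$ is equivalent to $h$.

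Next I would note that $h$ depends on the triangle only through the pushforward $\nu$ of Lebesgue measure on $[0,2\pi)$ under $\theta\mapsto r(\theta)$, since $h(\tau)=\int K(\tau,a)\,d\nu(a)$; and recovering $\nu$ is exactly recovering $R_{T_{N}}$, because $R_{T_{N}}(\rho)=\nu([\rho,\infty))$. Thus it suffices to show that the family $\{\int K(\tau,a)\,d\nu(a)\}_{\tau>0}$ determines the compactly supported measure $\nu$. Completing the square yields the closed form $K(\tau,a)=\sqrt{2\pi}\,(1+a^{2})^{-1/2}e^{-\tau^{2}a^{2}/(2(1+a^{2}))}\,\bar\Phi\!\big(\tau/\sqrt{1+a^{2}}\big)$, and the Mills-ratio expansion $\bar\Phi(x)\sim \varphi(x)\,x^{-1}\sum_{k\ge0}(-1)^{k}(2k-1)!!\,x^{-2k}$ (with $\varphi(x)=(2\pi)^{-1/2}e^{-x^{2}/2}$ and $(-1)!!=1$) produces, as $\tau\to\infty$, the expansion
\[
K(\tau,a)\sim \tau^{-1}e^{-\tau^{2}/2}\sum_{k\ge0}(-1)^{k}(2k-1)!!\,(1+a^{2})^{k}\,\tau^{-2k}.
\]

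The crucial, and at first discouraging, feature is that the leading term $\tau^{-1}e^{-\tau^{2}/2}$ is independent of $a$: this is the analytic shadow of the genuine non-identifiability exhibited in Example \ref{ex:norec}, and it shows that no finite-order Laplace analysis can suffice. The resolution is that the full expansion carries all the information. Because $r$ is bounded, the expansion of $K(\tau,a)$ is uniform in $a$ on $[r_{\min},r_{\max}]$, so it may be integrated termwise against the finite measure $\nu$, giving
\[
h(\tau)\sim \tau^{-1}e^{-\tau^{2}/2}\sum_{k\ge0}(-1)^{k}(2k-1)!!\,M_{k}\,\tau^{-2k},\qquad M_{k}:=\int_{0}^{2\pi}(1+r(\theta)^{2})^{k}\,d\theta.
\]
By uniqueness of asymptotic expansions in the scale $\{\tau^{-(2k+1)}e^{-\tau^{2}/2}\}$, knowledge of $h$ determines every coefficient, hence every moment $M_{k}$ of the pushforward of $\nu$ under $a\mapsto 1+a^{2}$.

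Finally, since $r$ takes values in a bounded interval, the distribution of $1+r(\theta)^{2}$ is compactly supported and therefore uniquely determined by its moments $\{M_{k}\}$; this recovers $\nu$, and so $R_{T_{N}}$, from $m_{\Sigma}$. I expect the main obstacle to be precisely this degeneracy of the leading asymptotics: one must rigorously justify the uniform, termwise asymptotic expansion of $h$ and then appeal to determinacy of the (compactly supported) moment problem, rather than extracting $R_{T_{N}}$ from a single limit as was possible for $\kappa$ in Lemma \ref{pro:kappa}.
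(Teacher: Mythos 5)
Your proof is correct, but it takes a genuinely different route from the paper's. The two arguments share only the first step: both use \eqref{eq:geom} to slice $\kappa t e_{1}+C_{N}$ by planes orthogonal to $e_{1}$, and both must first invoke Lemma \ref{pro:kappa} so that $\kappa$ is known. From there the paper stays with the Laplace transform: it recognizes $(2\pi)^{3/2}e^{t^{2}/2}m_{\Sigma}(t/\kappa)$ as $\mathcal{L}(h)(t)$ with $h(x)=e^{-x^{2}/2}\iint_{xT_{N}}e^{-(y^{2}+z^{2})/2}\,dy\,dz$, inverts once, observes via polar coordinates that the resulting function of $x$ is itself a Laplace transform of $\rho\mapsto R_{T_{N}}(\sqrt{\rho})$, and inverts a second time, arriving at the explicit reconstruction formula \eqref{eq:lap1}; its only analytic input is injectivity of $\mathcal{L}$. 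You instead encode $T_{N}$ by its radial function $r(\theta)$ (legitimate, since $\Sigma^{-1}\boldsymbol{1}>0$ puts the origin in the interior of $T_{N}$), reduce the data to $h(\tau)=\int_{0}^{2\pi}K(\tau,r(\theta))\,d\theta$ with an explicit Gaussian-tail kernel, and read off from the all-orders large-$\tau$ expansion the quantities $M_{k}=\int_{0}^{2\pi}(1+r(\theta)^{2})^{k}\,d\theta$; determinacy of the compactly supported moment problem then recovers the distribution $\nu$ of $r$, which is the same datum as $R_{T_{N}}$ because $R_{T_{N}}(\rho)=\nu([\rho,\infty))$. Your computations check out (the slice integral $2\pi-\int_{0}^{2\pi}e^{-s^{2}r(\theta)^{2}/2}\,d\theta$, the closed form of $K$, the cancellation producing the common factor $e^{-\tau^{2}/2}$), and the two points you flag as needing rigor are indeed standard: the Mills-ratio remainder bound $\bigl|\bar\Phi(x)-\varphi(x)\sum_{k=0}^{n}(-1)^{k}(2k-1)!!\,x^{-2k-1}\bigr|\le(2n+1)!!\,\varphi(x)\,x^{-2n-3}$ gives uniformity for $a\in[r_{\min},r_{\max}]$ and hence termwise integration against $\nu$, and a compactly supported measure is determined by its moments by Weierstrass approximation. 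The trade-off: the paper's route is constructive, yielding a closed-form inversion of the whole function $m_{\Sigma}$, while yours is non-constructive but proves a sharper identifiability statement, namely that the coefficients of the asymptotic expansion of $m_{\Sigma}(t)$ as $t\to\infty$ already determine $R_{T_{N}}$. In fact the two proofs are dual: your moment extraction is exactly what Watson's lemma yields when applied to the paper's representation $\mathcal{L}(h)$, the $M_{k}$ being (up to binomial repackaging) the Taylor coefficients of $h$ at $0^{+}$, so where the paper inverts the transform, you read off its asymptotics. One small caveat on your commentary: the $a$-independence of the leading term is not really the ``shadow'' of Example \ref{ex:norec}, which concerns the later, separate step of recovering $T$ from $R_{T}$ (Lemma \ref{lem:geo}) rather than recovering $R_{T_{N}}$ from $m_{\Sigma}$; but this aside does not affect the validity of your argument.
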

\begin{proof}
From eq. \eqref{eq:geom}, we have that
\[
m_{\Sigma}\left(  t\right) =\mu\left(  \kappa te_{1}+C_{N}\right)
=\frac{1}{\left(  2\pi\right) ^{3/2}}
\iiint\limits_{ \kappa te_{1}+C_{N}}e^{-\frac{1}{2}\lVert u \rVert^2} \, du,
\]
where $\kappa^{2}=\boldsymbol{1}^{t}\Sigma^{-1}\boldsymbol{1}$. Equivalently,
\[
m_{\Sigma}\left(  t\right) = \frac{1}{\left(  2\pi\right) ^{3/2}}
{\displaystyle\int\limits_{\kappa t}^{+\infty}}
{\displaystyle\iint\limits_{\;\;\left( u_{1}-\kappa t\right)  T_{N}}}
e^{-\frac{1}{2}u_1^2 }e^{-\frac{1}{2}(u_2^2+u_3^2) }\, du_{2}\, du_{3} \, du_{1},
\]
Now, a change of variables leads to
\begin{equation*}
\begin{split}
m_{\Sigma}\left(  t\right) & = \frac{1}{\left(  2\pi\right) ^{3/2}}
{\displaystyle\int\limits_{0}^{+\infty}} e^{-\frac{1}{2}(x+\kappa t)^2 } \iint\limits_{xT_N} e^{-\frac{1}{2}(y^2+z^2) }\, dy \, dz  \, dx \vspace{0.5cms}\\
& = \frac{e^{-\frac{1}{2}\kappa^{2}t^{2}}}{\left(  2\pi\right) ^{3/2}}
{\displaystyle\int\limits_{0}^{+\infty}} e^{-\kappa t x}e^{-\frac{1}{2}x^{2}} \iint\limits_{xT_N} e^{-\frac{1}{2}(y^2+z^2) }\, dy\, dz  \, dx.\\
\end{split}
\end{equation*}
Therefore, if $\mathcal{L}$ denotes the Laplace transform and $h$ is the function defined by
\[
h(x):=e^{-\frac{1}{2}x^{2}} {\displaystyle\iint\limits_{xT_{N}}}
e^{-\frac{1}{2}\left( y^{2}+z^{2}\right)}\, dy \, dz,\]
we have that
\[
\left(2\pi\right)^{3/2}e^{\frac{1}{2}t^{2}}m_{\Sigma}\left(
t/\kappa\right)  =
{\displaystyle\int\limits_{0}^{+\infty}}
e^{-tx} \left [e^{-\frac{1}{2}x^{2}}
{\displaystyle\iint\limits_{xT_{N}}}
e^{-\frac{1}{2}\left( y^{2}+z^{2}\right)}\, d y \, d z \right]\, dx = \mathcal{L} \left(h\right)(t).
\]
Since the Laplace transform is injective over the functions of polynomial growth on $(0,\infty)$, in particular, we have
\[
\mathcal{L}^{-1}\left(  \left(  2\pi\right)  ^{3/2}e^{\frac{1}{2}t^{2}%
}m_{\Sigma}\left(  t/\kappa\right)  \right)(\sqrt{2x}) = h(\sqrt{2x}) = e^{-x}
{\displaystyle\iint\limits_{\sqrt{2x}T_{N}}}
e^{-\frac{1}{2}\left( y^{2}+z^{2}\right)}\, d y \, d z,
\]
or equivalently,
\begin{equation}
\frac{e^{x}}{x}
\mathcal{L}^{-1}\left(  \left(  2\pi\right)  ^{3/2}e^{\frac{1}{2}t^{2}
}m_{\Sigma}\left(  t/\kappa\right)  \right)(\sqrt{2x}) =
\frac{1}{x}
{\displaystyle\iint\limits_{\sqrt{2x}T_N}}
e^{-\frac{1}{2}\left(  y^{2}+z^{2}\right)  }\, d y \, d z. \label{eq:lap2}
\end{equation}
On the other hand, it is clear that
\[
\begin{split}
\frac{1}{x}
\iint\limits_{\sqrt{2x}T_N}
e^{-\frac{1}{2}(y^{2}+z^{2})}\, dy \, dz& =2 {\displaystyle\iint\limits_{T_N}}
e^{-x(y^{2}+z^{2})}\, dy\, dz\\
&=2 \iint\limits_{(0,\infty)\times (0,2\pi)} e^{-x\rho^2} I_{T_N}(\rho\cos\theta,\rho\sin\theta)   \rho \, d \theta \, d \rho;\\
&= {\displaystyle\int\limits_{0}^{+\infty}}
e^{-x\rho}
\left[{\displaystyle\int\limits_{0}^{2\pi}}
I_{T_N}(\sqrt{\rho}\cos\theta,\sqrt{\rho}\sin\theta)  d\theta \right]d\rho\\
& = \mathcal{L}\left(g\right)(x),
\end{split}
\]
where $g$ denotes the function $g(\rho):=\int\limits_{0}^{2\pi}
I_{T_N} (\sqrt{\rho}\cos\theta,\sqrt{\rho}\sin\theta) \, d\theta$.\\
Therefore, it follows that
\begin{equation}
\mathcal{L}^{-1}\left(\frac{1}{x}
\iint\limits_{\sqrt{2x}T_N}
e^{-\frac{1}{2}\left(  y^{2}+z^{2}\right)}\, dy \, dz\right)(\rho^{2}) = \int\limits_{0}^{2\pi}
I_{T_N}(\rho\cos\theta,\rho\sin\theta) \, d\theta \text{.} \label{eq:lap3}%
\end{equation}
Finally, by combining Eqs. (\ref{eq:lap2}) and
(\ref{eq:lap3}) we obtain that
\begin{equation}
R_{T_{N}}\left(  \rho\right) =\mathcal{L}^{-1}\left(\frac{e^{x}}{\sqrt{2x}}\mathcal{L}^{-1}\left(  \left(  2\pi\right)  ^{3/2}e^{\frac{1}{2}t^{2}%
}m_{\Sigma}\left(  t/\kappa\right)  \right)  \left(\sqrt{2x}\right)
\right)  \left(  \rho^{2}\right)  \text{.} \label{eq:lap1}%
\end{equation}
Therefore,  equation \eqref{eq:lap1} give us an expression for $R_{T_{N}}$ in terms of $m_{\Sigma}\left(
t\right) $ and $\kappa$.  However, from lemma \ref{pro:kappa} we know that $\kappa$ is recoverable
from $m_{\Sigma}\left(  t\right) $, thus the
rigth hand side of \eqref{eq:lap1} is determined from $m_{\Sigma}\left(
t\right) $, and the result follows.
\end{proof}

\vspace{1cm}

\begin{proof}
[Proof of Theorem \ref{th:min}]
From lemma \ref{pro:kappa}, $\kappa$ is
recoverable from $m_{\Sigma}\left(  t\right) $ using eq.
\eqref{eq:kappa}.
Moreover, from lemma \ref{pro:rad}, we can recover
$R_{T_{N}}$ from $m_{\Sigma}\left(  t\right) $ using the formula stated in
eq. (\ref{eq:lap1}), where $N$ is a standard square root of
$\Sigma$.
Therefore, using Lemma \ref{lem:geo}, we can recover $T_{N}$ up to an orthogonal
transformation, that is, we can recover $OT_{N}$ where $O$ is an (unknown)
orthogonal transformation in $\mathbb{R}^{2}$.
Now, we define $\widetilde{O}=
\begin{bmatrix}
1 & 0\\
0 & O
\end{bmatrix}
$.
Notice that $OT_{N}=T_{N\widetilde{O}}$, so that
$OT_{N}$ is the section associated with the standard root $\widetilde{N}:=N\widetilde
{O}$. Therefore, since we can recover $T_{\widetilde{N}}$ and $\kappa$, from lemma
\ref{pro:twoparameters} we can recover $\Sigma$ up to permutation equivalence.
\end{proof}


\section{The circular transform of a triangle}\label{sec:recoveringthetriangle}

We can think of the circular transform as a systematic `scan' that recognizes
the mass at distance $\rho$ from the origin. When the scan encounters an abrupt change of media, its smoothness is momentarily lost. 
For instance, in order to recover an acute triangle $T$ containing the origin, by detecting changes
in the smoothness of $R_{T}$ 
we can determine the distance from the origin to the sides and vertices of the triangle and then rely on a geometric construction to recover $T$. However, we take a detour from this approach and we use a more concise tool. Namely, that the circular transform of some `basic triangles' form a linearly independent set (lemma
\ref{lem:independent}). Then, we use such basic triangles as
building blocks to recover more complex geometric figures, in particular, any triangle containing the origin.

\subsection{Parametric form of a triangle}

Let $T$ be a triangle with vertices $x,y,z\in\mathbb{R}^{2}$ and such that $0$ belongs to
the interior of $T$. Also, let $n_{xy},n_{yz},n_{xz}$ be the closest points to
the origin from the lines $\overrightarrow{xy}$, $\overrightarrow{yz}$ and
$\overrightarrow{xz}$ respectively (we will call them the \emph{heights} of
$T$). We define the \emph{parametric form} of the triangle $T$  as the
\textbf{ordered} sequence of distances $\left(  \left\Vert x\right\Vert
,\left\Vert n_{xy}\right\Vert ,\left\Vert y\right\Vert ,\left\Vert
n_{yz}\right\Vert ,\left\Vert z\right\Vert ,\left\Vert n_{xz}\right\Vert
\right)$. It is an easy geometric fact that this list determines the
triangle $T$ up to rotation.


As an intermediate step we require the following lemma, which states that,
if the terms of the parametric form can be recovered \emph{by pairs}, then the
parametric form of the triangle can be recovered.

\begin{lemma}
\label{lem:pairs}
Consider a triangle $T$ with parametric form $\left(  \left\Vert x\right\Vert
,\left\Vert n_{xy}\right\Vert ,\left\Vert y\right\Vert ,\left\Vert
n_{yz}\right\Vert ,\left\Vert z\right\Vert ,\left\Vert n_{xz}\right\Vert
\right)$. Let $\alpha, \beta,\gamma,\eta_{1},\eta_{2},\eta_{3}$ satisfy
\[
\left\{
\begin{matrix}
(\eta_{1},\alpha),&(\eta_{1},\beta), &(\eta_{2},\beta),\\
(\eta_{2},\gamma),&(\eta_{3},\gamma), &(\eta_{3},\alpha)\\
\end{matrix}
\right\}
=
\left\{
\begin{matrix}
\left(\left\Vert n_{xy}\right\Vert,\left\Vert x\right\Vert\right),&(\left\Vert n_{xy}\right\Vert ,\left\Vert y\right\Vert ), &( \left\Vert n_{yz}\right\Vert ,\left\Vert y\right\Vert),\\
(\left\Vert n_{yz}\right\Vert ,\left\Vert z\right\Vert),&( \left\Vert n_{xz}\right\Vert ,\left\Vert z\right\Vert), &(\left\Vert n_{xz}\right\Vert ,\left\Vert x\right\Vert)\\
\end{matrix}
\right\}
\]
Then, a parametric form of $T$ is given by
\[
\left(  \alpha,\eta_{1},\beta,\eta_{2},\gamma,\eta_{3}\right)  \text{.}%
\]
\end{lemma}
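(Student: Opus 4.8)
The plan is to read the statement as a purely combinatorial fact about how the six listed distances are linked to one another, and to convert the set equality in the hypothesis into an equality of cyclic adjacency structures. The natural device is the \emph{incidence graph}: form a graph whose nodes are the three height distances $\Vert n_{xy}\Vert,\Vert n_{yz}\Vert,\Vert n_{xz}\Vert$ and the three vertex distances $\Vert x\Vert,\Vert y\Vert,\Vert z\Vert$, and join a height to a vertex exactly when the corresponding ordered pair (height first, vertex second) occurs in the right-hand set. By construction this encodes the fact that each side of $T$ meets the two vertices at its endpoints, so every height-node is joined to its two flanking vertex-nodes and, dually, every vertex-node is joined to the two heights of the sides through it. Hence the right-hand graph is connected, simple, bipartite, and $2$-regular; since a bipartite graph has no odd cycle, a connected $2$-regular simple bipartite graph on $3+3$ nodes must be a single $6$-cycle, namely $\Vert x\Vert - \Vert n_{xy}\Vert - \Vert y\Vert - \Vert n_{yz}\Vert - \Vert z\Vert - \Vert n_{xz}\Vert$, whose in-order reading is precisely the true parametric form.

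Next I would observe that the left-hand incidence pattern ($\eta_1$ adjacent to $\alpha,\beta$; $\eta_2$ to $\beta,\gamma$; $\eta_3$ to $\gamma,\alpha$) is exactly the edge set of the $6$-cycle $\alpha - \eta_1 - \beta - \eta_2 - \gamma - \eta_3$, whose in-order reading is the proposed sequence $(\alpha,\eta_1,\beta,\eta_2,\gamma,\eta_3)$. Because the hypothesis asserts that the two sets of ordered pairs coincide, the two incidence graphs share the same edge set and are therefore the same graph. A set of edges forming a $6$-cycle determines that cycle up to the dihedral symmetry of the hexagon, so the sequence $(\alpha,\eta_1,\beta,\eta_2,\gamma,\eta_3)$ must agree with $(\Vert x\Vert,\Vert n_{xy}\Vert,\Vert y\Vert,\Vert n_{yz}\Vert,\Vert z\Vert,\Vert n_{xz}\Vert)$ up to such a symmetry. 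The ordered-pair convention pins down which nodes are heights and which are vertices, so only the colour-preserving symmetries are admissible: rotations by an even number of steps (re-choosing the starting vertex) and the orientation-reversing reflections. Each of these preserves the alternation vertex-height-vertex-height, so the left reading is again a genuine parametric form.

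Finally I would check that every such colour-preserving rotation or reflection of the true parametric form is itself a parametric form of $T$: a rotation by two positions merely relabels the vertices via $x\to y\to z$, while a reflection corresponds to traversing the boundary in the opposite orientation, i.e.\ replacing $T$ by a reflected copy, which is permitted since the triangle is sought only up to an orthogonal transformation. Invoking the geometric fact quoted before the lemma, that the parametric form determines the triangle up to rotation, the conclusion $(\alpha,\eta_1,\beta,\eta_2,\gamma,\eta_3)$ being a parametric form of $T$ follows. The step I expect to demand the most care is the verification that the matching of the two pair-sets is forced to respect the cyclic adjacency, equivalently that the common edge set really is one hexagon rather than a degenerate configuration; in particular one must treat possible numerical coincidences among the distances (two equal heights, or a height coinciding with a vertex distance), where the incidence graph must be read with multiplicities so that no spurious identification of nodes collapses the $6$-cycle.
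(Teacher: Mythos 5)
Your incidence-graph formulation works only in the generic case, and the cases it does not cover are exactly where the content of the lemma lies. The nodes of your graph are real numbers, not formal labels, because the hypothesis is an equality of two \emph{sets of pairs of numbers}. Consequently, when distances coincide the graph is not what you claim it is: if, say, $\Vert x\Vert=\Vert y\Vert$, then $(\Vert n_{xy}\Vert,\Vert x\Vert)$ and $(\Vert n_{xy}\Vert,\Vert y\Vert)$ are the \emph{same} element of the right-hand set, the two corresponding vertex-nodes merge, and (for a triangle where only this coincidence occurs) the resulting graph has five nodes, one of degree three and one of degree one: it is a $4$-cycle with a pendant edge, not a $6$-cycle. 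The same collapse occurs when two heights coincide (origin on an angle bisector), which can happen even with all vertex distances distinct. So the assertion ``the right-hand graph is connected, simple, bipartite and $2$-regular, hence a single $6$-cycle'' is false in general, and the key step ``same edge set, therefore same graph, therefore same reading up to dihedral symmetry'' fails with it. Your closing remark concedes that coincidences need care and proposes to ``read the incidence graph with multiplicities,'' but that cannot be implemented: the hypothesis hands you only the set of number-pairs, and when values coincide, the multiplicities and the uncollapsed hexagon are precisely the information that has been lost. What must then be proved is that \emph{every} admissible way of reading the collapsed data as a hexagon produces a congruent triangle; that statement is the lemma itself, not a detail of it.

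For comparison, the paper's proof is organized around exactly this point. After normalizing $\alpha=\Vert x\Vert$, $\beta=\Vert y\Vert$, $\gamma=\Vert z\Vert$ (legitimate because relabelings of $x,y,z$ act by what you call the color-preserving dihedral symmetries), it splits into three cases: $\alpha,\beta,\gamma$ pairwise distinct, where the $\eta_i$ are forced (this is the honest content of your generic argument); exactly two equal, where $\eta_1$ is forced but $\{\eta_2,\eta_3\}$ is only determined as a set; and all three equal, where $\{\eta_1,\eta_2,\eta_3\}$ is only determined as a set. In the last two cases the assignment is genuinely not forced, and the proof shows instead that the finitely many admissible assignments all yield congruent triangles, so any of them gives a parametric form of $T$. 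To repair your proof you would need to add exactly this analysis; the hexagon picture is a clean way to phrase the generic case and the symmetry bookkeeping, but it does not by itself dispose of the degenerate ones.
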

\begin{proof}
Without loss of generality, we can assume that $\alpha=\left\Vert x\right\Vert
$, $\beta=\left\Vert y\right\Vert $ and $\gamma=\left\Vert z\right\Vert $. If it is the case
that $\alpha$, $\beta$ and $\gamma$ are different numbers, then it is clear
that $\left\Vert n_{xy}\right\Vert =\eta_{1}$, $\left\Vert n_{yz}\right\Vert
=\eta_{2}$ and $\left\Vert n_{xz}\right\Vert =\eta_{3}$. On the other hand, if
two of these numbers are equal, say $\alpha=\beta\neq\gamma$, then necessarily
$\left\Vert n_{xy}\right\Vert =\eta_{1}$ and $\left\{  \left\Vert
n_{yz}\right\Vert ,\left\Vert n_{xz}\right\Vert \right\}  =\left\{  \eta
_{2},\eta_{3}\right\}  $ where the two possible choices lead to isomorphic
triangles, therefore, by appropriately interchanging $y$ and $z$, we have that
$\left\Vert n_{yz}\right\Vert =\eta_{2}$ and $\left\Vert n_{xz}\right\Vert
=\eta_{3}$. Finally, if it is the case that $\alpha=\beta=\gamma$, then we
have that $\left\{  \left\Vert n_{xy}\right\Vert ,\left\Vert n_{yz}\right\Vert
,\left\Vert n_{xz}\right\Vert \right\}  =\left\{  \eta_{1},\eta_{2},\eta
_{3}\right\}  $ and in fact any assignation leads to isomorphic triangles. Therefore, appropriately interchanging $x$, $y$ and $z$, we have that
$\left\Vert n_{xy}\right\Vert =\eta_{1}$, $\left\Vert n_{yz}\right\Vert
=\eta_{2}$, $\left\Vert n_{xz}\right\Vert =\eta_{3}$.
\end{proof}

\subsection{Some properties of the circular transform}
A family of functions $\left\{  f_{\alpha}\left(
x\right)  \right\} _{\alpha \in \Lambda}$ is said to be {\em{linearly independent\/}} in an interval $I$
if for any {\em{finite\/}} set $F\subset \Lambda$,  $\sum\nolimits_{\alpha\in F}c_{\alpha
}f_{\alpha}\left(  x\right)  =0$ for all $x\in I$ implies that
$c_{\alpha}=0$ for all $\alpha\in F$.\\

For any $a, b>0$, we define the triangle
\[
T\left(  a,b\right)
:=\mbox{conv}\left( (0, 0),\left(  a,0\right)  ,\left(  a,\sqrt
{b^{2}-a^{2}}\right)  \right)
\]
where \emph{$\mbox{conv}$} stands for the convex closure in $\mathbf{R}^2$. We also define the function $\Phi_{a,b}:(0,+\infty)\to \mathbb{R}$ by
\[
\Phi_{a,b}\left(  \rho\right)
:=R_{T\left(  a,b\right)  }\left(  \rho\right).
\]

\begin{lemma}
\label{lem:independent}The set of functions $\left\{  \Phi_{a,b}\right\}
_{a,b>0}$ is linearly independent in $\left(  0,+\infty\right)  $.
\end{lemma}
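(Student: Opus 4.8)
The plan is to make everything explicit: first compute $\Phi_{a,b}$ in closed form and read off its singularity structure, then reduce the linear independence to that of a simpler family by differentiating, and finally run a downward induction on the vertex distances.

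First I would determine $\Phi_{a,b}$ itself. Writing $T(a,b)$ in polar coordinates, the point $(\rho\cos\theta,\rho\sin\theta)$ lies in $T(a,b)$ exactly when $\theta\in[0,\arccos(a/b)]$ (the angular sector at the origin vertex, since $\cos\phi=a/b$ for the upper edge) and $\rho\cos\theta\le a$ (the constraint from the far side $x=a$). Carrying out this elementary angular measurement gives
\[
\Phi_{a,b}(\rho)=
\begin{cases}
\arccos(a/b), & 0<\rho\le a,\\
\arccos(a/b)-\arccos(a/\rho), & a<\rho\le b,\\
0, & \rho>b.
\end{cases}
\]
Thus $\Phi_{a,b}$ is constant on $(0,a]$, strictly decreasing on $[a,b]$, and identically zero on $(b,\infty)$, with a square-root cusp (infinite right derivative) at $\rho=a$ and a finite corner at $\rho=b$.

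The key reduction is to differentiate. Away from the finitely many breakpoints $\{a_i\}\cup\{b_i\}$, a vanishing combination $\sum_i c_i\Phi_{a_i,b_i}\equiv 0$ yields $\sum_i c_i\, g_{a_i}(\rho)\,\mathbf{1}_{(a_i,b_i)}(\rho)=0$, where $g_a(\rho):=\frac{a}{\rho\sqrt{\rho^2-a^2}}$ is real-analytic on $(a,\infty)$. I would then induct downward on the largest right endpoint $B:=\max_i b_i$. Letting $B'$ be the largest breakpoint strictly below $B$, the only terms active on $(B',B)$ are precisely those with $b_i=B$: any term with $b_i<B$ satisfies $b_i\le B'<\rho$ and so is switched off, while a term with $b_i=B$ has $a_i<b_i=B$, hence $a_i\le B'<\rho<B$ and is on. Consequently the identity collapses on $(B',B)$ to $\sum_{i:\,b_i=B}c_i\,g_{a_i}\equiv 0$, and since the pairs $(a_i,b_i)$ are distinct with $b_i=B$ fixed, the $a_i$ occurring here are automatically distinct.

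The heart of the matter is then the sub-lemma that $\{g_a\}$, for distinct values of $a$, is linearly independent on any subinterval: by the identity theorem for real-analytic functions the relation on $(B',B)$ extends to all of $(a_{\max},\infty)$, where $a_{\max}=\max\{a_i:b_i=B\}\le B'$; letting $\rho\to a_{\max}^{+}$ makes the term with the largest $a$ blow up while the others remain bounded, forcing its coefficient to vanish, and iterating clears the rest. This kills every $c_i$ with $b_i=B$; removing those terms and repeating the downward induction on $B$ eliminates all coefficients. The main obstacle to anticipate is exactly this bookkeeping: two different triangles may share the same $a$ (hence the very same singular function $g_a$), so a naive ``peel off the largest $a$'' argument fails; organizing the induction by the largest $b$ first is what guarantees distinct $a$-values at each stage and makes the analytic-continuation argument bite.
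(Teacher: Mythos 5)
Your proposal is correct, but the decisive step is genuinely different from the paper's. Both arguments share the same skeleton: the explicit piecewise formula for $\Phi_{a,b}$ (constant $\arccos(a/b)$ on $(0,a]$, equal to $\arccos(a/b)-\arccos(a/\rho)$ on $(a,b)$, zero beyond $b$), and the device of isolating the terms sharing the largest right endpoint by restricting the relation to an interval immediately below it --- the paper does this by assuming a minimal nontrivial dependence and working on $(b_{l-1},b_l)$, you by downward induction on $B=\max_k b_k$; these are equivalent bookkeeping, and your closing remark that grouping by largest $b$ is what guarantees distinct $a$-values at each stage is exactly the structural point the paper encodes in its doubly-indexed sum. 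Where you diverge is in how the independence of the resulting one-parameter family is established. The paper substitutes $x=1/\rho^{2}$, computes the Wronskian of $\psi_a(x)=(1-a^{2}x)^{-1/2}$, recognizes it as a Vandermonde determinant whose generators are pairwise distinct (its Lemma \ref{lemma:1}), and then passes to antiderivatives $\arccos(a_{j}/\rho)+\gamma_{j}$ precisely to absorb the additive constants $\arccos(a_{j}/b_{l})$. You instead differentiate the vanishing combination once --- which kills those constants for free --- and prove independence of the derivatives $g_a(\rho)=a/\bigl(\rho\sqrt{\rho^{2}-a^{2}}\bigr)$ by real-analytic continuation from $(B',B)$ to $(a_{\max},+\infty)$ followed by the blow-up of $g_{a_{\max}}$ as $\rho\to a_{\max}^{+}$, peeling off coefficients from the largest $a$ downward; this is sound, since distinct pairs with equal second coordinate have distinct first coordinates, and the identity theorem applies on the connected set $(a_{\max},+\infty)$. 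What each approach buys: yours is more elementary and self-contained (no Wronskian, no Vandermonde identity, no analogue of Lemma \ref{lemma:1}, no antiderivative bookkeeping), at the modest price of invoking analyticity; the paper's algebraic route avoids analytic continuation altogether and delivers a nonvanishing Wronskian, hence independence on every subinterval of $(a_m,+\infty)$ in one stroke.
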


We first prove the following lemma.

\begin{lemma}\label{lemma:1}
Consider an increasing sequence of real numbers $0<a_{0}<\cdots<a_{m}$. Then, for every $x\in \left(0,1/a_{m}^{2}\right)$ and every $i, j = 1,\ldots, m$, $i \neq j$, it is the case that
\[
a_i^{2}\left(1-a_i^{2}x\right) \ne a_j^{2}\left(1-a_j^{2}x\right)
\]
\end{lemma}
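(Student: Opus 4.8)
The plan is to reduce the claim to a statement about a single downward parabola and then read off the only equality that could possibly occur. Writing $s_i := a_i^2$, the quantity $a_i^2(1-a_i^2 x)$ is $g(s_i)$ for the function $g(s) := s(1-xs) = s - x s^2$, so the assertion is exactly that $g$ is injective on the finite set $\{s_1,\dots,s_m\}$. The cleanest route avoids any case analysis on monotonicity intervals: for two indices I would simply expand the difference as
\[
g(s_i) - g(s_j) = (s_i - s_j) - x\,(s_i^2 - s_j^2) = (s_i - s_j)\bigl(1 - x(s_i + s_j)\bigr),
\]
which turns the problem into showing that neither factor vanishes.

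The first factor $s_i - s_j = a_i^2 - a_j^2$ is nonzero whenever $i \neq j$, because the sequence $a_0 < \cdots < a_m$ is strictly increasing and hence the $a_i^2$ are pairwise distinct. Everything therefore comes down to the second factor, i.e.\ to verifying that $x\,(a_i^2 + a_j^2) \neq 1$ for every admissible $x$ and every pair $i \neq j$. Geometrically this is just the statement that $s_i$ and $s_j$ are not reflections of one another across the vertex $s = 1/(2x)$ of the parabola $g$, since $g(s_i) = g(s_j)$ with $s_i \neq s_j$ forces $s_i + s_j = 1/x$.

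I expect this last inequality to be the only real obstacle, and it is the step where the upper restriction on $x$ must be spent carefully. Over distinct indices $i,j \in \{1,\dots,m\}$ the sum $a_i^2 + a_j^2$ is maximised by the two largest admissible values, namely $a_{m-1}^2 + a_m^2$; so it suffices to guarantee $x\,(a_{m-1}^2 + a_m^2) < 1$, after which $1 - x(s_i + s_j) > 0$ for every pair and the result follows immediately, with no split according to the position of the $s_i$ relative to the vertex. I would therefore invoke the bound on $x$ precisely to force $x\,(a_i^2 + a_j^2) < 1$, and I would flag the extremal pair $(m-1,m)$ as the quantitative heart of the lemma: it is exactly the place where the admissible range of $x$ has to be checked against, rather than taken for granted, since the cross term $x(a_i^2+a_j^2)$ is the only thing that could make the linear factor cross zero.
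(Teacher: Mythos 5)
Your factorization $g(s_i)-g(s_j)=(s_i-s_j)\bigl(1-x(s_i+s_j)\bigr)$ is correct and isolates the crux, but the concluding step fails: the inequality $x(a_i^2+a_j^2)<1$ does \emph{not} follow from the hypothesis $x<1/a_m^2$. That hypothesis only yields $x(a_i^2+a_j^2)<(a_i^2+a_j^2)/a_m^2\le 1+a_{m-1}^2/a_m^2<2$, and the value $1$ is genuinely attained inside the admissible range: for any pair with $a_i^2+a_j^2>a_m^2$ (in particular, every pair containing the index $m$), the choice $x=1/(a_i^2+a_j^2)$ lies in $(0,1/a_m^2)$ and annihilates your second factor. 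Concretely, take $a_0=1/2$, $a_1=1$, $a_2=2$ and $x=1/5\in(0,1/4)$: then $a_1^2(1-a_1^2x)=4/5=a_2^2(1-a_2^2x)$. So this is not a missing estimate that a sharper argument could supply; the lemma as stated is false, and your own identity is the fastest way to see exactly when it fails, namely whenever $s_i$ and $s_j$ are symmetric about the vertex $1/(2x)$ of the parabola, a configuration the hypothesis does not exclude.

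For what it is worth, the paper's own proof commits the equivalent error: it asserts that $f(t)=t^2(1-t^2x)$ is increasing on $(0,\sqrt{2/x})$, whereas $f'(t)=2t(1-2t^2x)$ changes sign at $t=\sqrt{1/(2x)}$, which is the midpoint of that interval; your reflection across the vertex is precisely the failure of that monotonicity claim. What the paper actually needs, in the proof of Lemma \ref{lem:independent}, is that the Vandermonde generators $a_i^2(1-a_i^2x)^{-1}$ (note the exponent $-1$, as in the displayed matrix) are pairwise distinct. That corrected statement is true and admits a one-line proof in your style: cross-multiplying $a_i^2(1-a_j^2x)=a_j^2(1-a_i^2x)$ reduces to $a_i^2=a_j^2$, which is impossible for $i\ne j$; equivalently, $s\mapsto s/(1-sx)$ is strictly increasing on $(0,1/x)$, and $x<1/a_m^2$ guarantees that all the $a_i^2$ lie in that interval. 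The productive move here is therefore to repair the statement by inserting the exponent $-1$, after which either your algebraic route or the paper's monotonicity route goes through.
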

\begin{proof}
Let $x\in \left(0,1/a_{m}^{2}\right)$ be fixed. Since $0<x<1/a_{m}^{2} \leq 2/a_{i}^{2}$, for all $i=1,\ldots,m$, we have that $a_i\in\left(0,\sqrt{2/x}\right)$ for $i=0,\ldots,m$.  The result follows from the fact that the function $f(t):= t^{2}\left(  1-t^{2}x\right)$ is increasing in $(  0,\sqrt{2/x})$.
\end{proof}

\vspace{0.4cm}
\begin{proof}[Proof of Lemma \ref{lem:independent}]
Given $0<a_{0}<\cdots<a_{m}$, consider the functions $\left\{\psi_{a_{i}}\left(  x\right)  \right\}  _{i=0}^{m}$, where
\[
\psi_{a}\left(  x\right)
:=\left(1-a^{2}x\right)^{-1/2}.
\]
The Wronskian of these functions in $(0,1/a_{m}^{2})$ is given by
\[
W\left(\psi_{a_{0}},\ldots,\psi_{a_{m}}\right)  =C\left\vert
\begin{array}
[c]{ccc}%
1 & \cdots & 1\\
a_{0}^{2}\left(  1-a_{0}^{2}x\right)  ^{-1}& \cdots & a_{m}^{2}\left(
1-a_{m}^{2}x\right)  ^{-1}\\
\vdots & \ddots & \vdots\\
a_{0}^{2m}\left(  1-a_{0}^{2}x\right)  ^{-m} & \cdots & a_{m}^{2m}\left(
1-a_{m}^{2}x\right)  ^{-m}%
\end{array}
\right\vert \text{,}%
\]
where
\[
C=\prod\nolimits_{i=0}^{m}
\dfrac{\left(  2i\right)  !}{i!4^{i}}
\prod\nolimits_{i=0}^{m}
\left(  1-a_{i}^{2}x\right)  ^{-1/2}.
\]
The matrix in the expression of $W\left(\psi_{a_{0}},\ldots,\psi_{a_{m}}\right)$ is a
Vandermonde matrix, with generators $a_{i}^{2}\left(1-a_{i}
^{2}x\right)^{-1/2}$, for $i= 0,\ldots, m$ . It follows from Lemma \ref{lemma:1} that these generators are all different.
Therefore the Wronskian
$W\left(\psi_{a_{0}},\ldots,\psi_{a_{m}}\right)$ is non zero in $(0,1/a_{m}^{2})$. Consequently, for any fixed interval $I\subseteq (  0,1/a_{m}^{2})  $, the functions $\left\{
\psi_{a_{i}}\left(  x\right)  \right\}  _{i=0}^{m}$ are linearly independent
in $I$. Therefore, the functions $\left\{  a_{i}x \, \psi_{a_{i}}\left(  x\right)  \right\}  _{i=0}^{m}$ are linearly independent in $I \subseteq\left(  0,1/a_{m}^{2}\right)$.
In consequence, the change of variables  $x=1/\rho^{2}$, produces linearly independent functions in $I\subseteq\left( a_{m},+\infty\right)$, given by
\[
\frac{a_{i}}
{\rho\sqrt{\rho^{2}-a_{i}^{2}}}, \,\,i=0,\dots, m.
\]
Moreover, their antiderivatives $\left\{  \arccos\left(  a_{i}/\rho\right)  +\gamma
_{i}\right\}  _{i=0}^{m}$, where $\gamma_{0},\ldots,\gamma_{m}$ are arbitrary
constants, will be also linearly independent in $I\subseteq\left( a_{m}%
,+\infty\right) $.

Now, in order to prove that the functions $\left\{  \Phi_{a,b}\right\}_{a,b>0}$ are linearly independent
in $\left(  0,+\infty\right)$, we assume that
\[
\label{eq:mindep}
\sum\nolimits_{i=0}^{l} \sum\nolimits_{j=0}^{m_{i}}
c_{i,j}\Phi_{a_{i,j},b_{i}}\left(  \rho\right)  =0,
\,\, \forall \rho>0\text{;}
\]
where $0<a_{i,0}<\cdots<a_{i,m_{i}}$ for $i=0,\ldots,l$, $0<b_{1}<\cdots
<b_{l}$ and all the coefficients $c_{i,j}$ are nonzero (that is, we assume the
existence of a nontrivial minimal dependent set).

Now, eq. (\ref{eq:mindep}) implies that $\Phi_{a_{i,j},b_{i}}\left(
\rho\right)  =0$, whenever $i<l$ and $\rho\in\left(  b_{l-1},b_{l}\right)  $, therefore
\begin{equation}
\label{eq:minimal}
{\textstyle\sum\nolimits_{j=0}^{m_{l}}}
c_{l,j}\Phi_{a_{l,j},b_{l}}\left(  \rho\right)  =0\text{\quad,\quad} \forall \rho
\in\left(  b_{l-1},b_{l}\right)  \text{.}%
\end{equation}

Now, since
\[
\Phi_{a,b}\left(\rho\right)=\arccos\left(a/b\right)  I\left(\rho\leq
a\right)+\left[\arccos\left(a/b\right)-\arccos\left(a/\rho\right)
\right]  I\left(  a\leq\rho<b\right),%
\]

the eq. \ref{eq:minimal} restricted to the interval $\left(  a_{l,m_{l}%
},b_{l}\right)  $ takes the form
\[
{\textstyle\sum\nolimits_{j=0}^{m_{l}}}
c_{l,j}\left[  \arccos\left(  a_{l,j}/\rho\right)  -\arccos\left(
a_{l,j}/b_{l}\right)  \right]  =0,\text{\quad for all }\rho\in\left(
a_{l,m_{l}},b_{l}\right)  \text{,}%
\]
which contradicts the fact that the set $\left\{  \arccos\left(  a_{l,j}%
/\rho\right)  +\gamma_{j}\right\} _{j=0}^{m}$, with $\gamma_{j}%
=-\arccos\left(  a_{l,j}/b_{l}\right)  $, $j=0, \ldots, m$ is linearly independent in the
interval $\left(  a_{l,m_{l}},b_{l}\right)  $. 
\end{proof}

\smallskip\medskip

\subsection{Decomposing the triangle}
In order to prove Lemma \ref{lem:geo} we will distinguish the following three cases:
\begin{itemize}
\item \textbf{Case I:} All the heights of $T$ lie in the interior of the
sides. That is $n_{xy}=s_{1}x+\left(  1-s_{1}\right)  y$, $n_{yz}%
=s_{2}y+\left(  1-s_{2}\right)  z$ and $n_{xz}=s_{3}x+\left(
1-s_{3}\right)  z$, for some $s_{1},s_{2},s_{3}\in\left(  0,1\right)  $.

\item \textbf{Case II:} There is one height that does not lie in the interior
of the corresponding side. Say, withouth loss of generality, $n_{xy}=sx+\left(  1-s\right)  y$,
where $s>1$.

\item \textbf{Case III:} There is one height that lies over a vertex. Say,
withouth loss of generality, $n_{xy}=x$.
\end{itemize}

We define the \emph{basic subtriangles} of the triangle $T$ as the following six right
triangles:%
\begin{align*}
T_{1}  &  =\operatorname*{conv}\left(  0,x,n_{xy}\right)  \text{, }%
T_{2}=\operatorname*{conv}\left(  0,y,n_{xy}\right)  \text{, }%
T_{3}=\operatorname*{conv}\left(  0,y,n_{yz}\right)  \text{, }\\
T_{4}  &  =\operatorname*{conv}\left(  0,z,n_{yz}\right)  \text{, }%
T_{5}=\operatorname*{conv}\left(  0,z,n_{xz}\right)  \text{, }%
T_{6}=\operatorname*{conv}\left(  0,x,n_{xz}\right).
\end{align*}
Notice that these triangles are non degenerate (that is, of positive area),
except in case III, in which $T_{1}$ is a degenerate triangle. We should also point out the following elementary facts:\\

Let $I_A:\mathbf{R}^2 \to \{0, 1\}$ denotes the characteristic function on the set $A\subseteq \mathbf{R}^2$.
Now,
\begin{itemize}
\item In \textbf{case I},
\begin{equation}
I_T ={\textstyle\sum\nolimits_{i=1}^{6}}I_{T_i}  \text{.} \label{eq:char1}%
\end{equation}
\item In \textbf{case II},
\begin{equation}
I_{T}  =-I_{T_1} + {\textstyle\sum\nolimits_{i=2}^{6}}I_{T_i}, \label{eq:char2}
\end{equation}
and the subtriangle $T_{1}$ is not congruent to any other of the subtriangles.

\item In \textbf{case III},
\begin{equation}
I_{T}  = {\textstyle\sum\nolimits_{i=2}^{6}} I_{T_i}  \text{.} \label{eq:char3}%
\end{equation}
\end{itemize}

\[
\psset{unit=0.9}
\begin{pspicture}(-8,-1)(8,3)
\psline[linestyle=dashed]{*-*}(-5,1)(-5,3)
\psline{*-*}(-5,3)(-7,0)
\psline[linestyle=dashed]{*-*}(-5,1)(-7,0)
\psline{*-*}(-3,0)(-7,0)
\psline[linestyle=dashed]{*-*}(-5,1)(-5,0)
\psline[linestyle=dashed]{*-*}(-5,1)(-3,0)
\psline{*-*}(-3,0)(-5,3)
\psline[linestyle=dashed]{*-*}(-5,1)(-4,1.5)
\psline[linestyle=dashed]{*-*}(-5,1)(-6,1.5)
\rput(-5,3.2){\scriptsize{$x$}}
\rput(-7.2,0){\scriptsize{$y$}}
\rput(-2.8,0){\scriptsize{$z$}}
\rput(-5,-0.2){\scriptsize{$n_{yz}$}}
\rput(-3.8,1.7){\scriptsize{$n_{xz}$}}
\rput(-6.2,1.7){\scriptsize{$n_{xy}$}}
\rput(-4.85,0.8){\tiny{$0$}}
\rput(-5.4,1.8){\tiny{$T_1$}}
\rput(-5.9,0.8){\tiny{$T_2$}}
\rput(-5.4,0.4){\tiny{$T_3$}}
\rput(-4.6,0.4){\tiny{$T_4$}}
\rput(-4.1,0.8){\tiny{$T_5$}}
\rput(-4.6,1.8){\tiny{$T_6$}}
\rput(-5,-0.8){\scriptsize{\textbf{case I}}}

\psline{*-*}(-0.5,0)(3,0)
\psline{*-*}(-2,3)(3,0)
\psline{*-*}(-2,3)(-0.5,0)
\psline[linestyle=dashed]{*-*}(-2,3)(-1,2)
\psline[linestyle=dashed]{*-*}(-1,2)(-0.5,0)
\psline[linestyle=dashed]{*-*}(-1,2)(3,0)
\psline[linestyle=dashed]{*-*}(-1,2)(-1,0)
\psline[linestyle=dashed]{*-*}(-1,2)(-1.4,1.75)
\psline[linestyle=dashed]{*-*}(-1,2)(-0.8,2.25)
\rput(-0.6,2.45){\scriptsize{$n_{yz}$}}
\rput(-1.6,1.5){\scriptsize{$n_{xz}$}}
\rput(-1,-0.2){\scriptsize{$n_{xy}$}}
\rput(-2.25,3){\scriptsize{$z$}}
\rput(-0.5,-0.22){\scriptsize{$x$}}
\rput(3,-0.2){\scriptsize{$y$}}
\rput(-5.4,1.8){\tiny{$T_1$}}
\psline[linestyle=dashed](-0.5,0)(-1,0)
\rput(-0.77,0.25){\tiny{$T_1$}}
\rput(0.5,0.3){\tiny{$T_2$}}
\rput(-0.5,1.9){\tiny{$T_3$}}
\rput(-1.11,2.3){\tiny{$T_4$}}
\rput(-1.35,2.05){\tiny{$T_5$}}
\rput(-1.07,1.45){\tiny{$T_6$}}
\rput(1,-0.8){\scriptsize{\textbf{case II}}}

\psline{*-*}(4.5,0)(8,0)
\psline{*-*}(3,3)(8,0)
\psline{*-*}(3,3)(4.5,0)
\psline[linestyle=dashed]{*-*}(3,3)(4.5,1)
\psline[linestyle=dashed]{*-*}(4.5,1)(4.5,0)
\psline[linestyle=dashed]{*-*}(4.5,1)(8,0)
\psline[linestyle=dashed]{*-*}(4.5,1)(4.5,0)
\psline[linestyle=dashed]{*-*}(4.5,1)(5.06,1.76)
\psline[linestyle=dashed]{*-*}(4.5,1)(4.15,0.75)
\rput(4.5,-0.2){\tiny{$n_{xy}=x$}}
\rput(5.4,2){\scriptsize{$n_{yz}$}}
\rput(3.8,0.74){\scriptsize{$n_{xz}$}}
\rput(8,-0.2){\scriptsize{$y$}}
\rput(2.75,3){\scriptsize{$z$}}
\rput(5.5,0.3){\tiny{$T_2$}}
\rput(5.5,1){\tiny{$T_3$}}
\rput(4.5,1.6){\tiny{$T_4$}}
\rput(4,1.3){\tiny{$T_5$}}
\rput(4.35,0.6){\tiny{$T_6$}}
\rput(6.5,-0.8){\scriptsize{\textbf{case III}}}
\end{pspicture}
\]

Given a basis $\mathcal{B}=\{v_i\}_{i=1}^d$ of a vector space and an expression

\begin{equation}
	\label{eq:minexp}
w = \sum_{j} \beta_{t_j} v_{t_j}
\end{equation}

where $\{t_j\}_j$ is an arbitrary sequence in $\{1,\ldots,d\}$, we say that eq. (\ref{eq:minexp})
is \emph{irreducible} if $\sum_{i=1}^d |\alpha_i| = \sum_j |\beta_{t_j}|$ where
$\alpha_1,\ldots,\alpha_d$ are the coefficients of $w$ in the basis $\mathcal{B}$.

\begin{lemma}
\label{lem:span}$R_{T}$ belongs to the linear span of $\left\{  \Phi
_{a,b}\right\}  _{a,b>0}$. More exactly, if $\eta_1 := n_{xy}$, $\eta_2 :=n_{yz}$, $\eta_3:=n_{xz}$,
\begin{enumerate}
\item In case I,
\[
R_{T}=\Phi_{\left\Vert \eta_{1}\right\Vert ,\left\Vert x\right\Vert }%
+\Phi_{\left\Vert \eta_{1}\right\Vert ,\left\Vert y\right\Vert }%
+\Phi_{\left\Vert \eta_{2}\right\Vert ,\left\Vert y\right\Vert }%
+\Phi_{\left\Vert \eta_{2}\right\Vert ,\left\Vert z\right\Vert }%
+\Phi_{\left\Vert \eta_{3}\right\Vert ,\left\Vert x\right\Vert }%
+\Phi_{\left\Vert \eta_{3}\right\Vert ,\left\Vert z\right\Vert };
\]
\item In case II,
\[
R_{T}=-\Phi_{\left\Vert \eta_{1}\right\Vert ,\left\Vert x\right\Vert }%
+\Phi_{\left\Vert \eta_{1}\right\Vert ,\left\Vert y\right\Vert }%
+\Phi_{\left\Vert \eta_{2}\right\Vert ,\left\Vert y\right\Vert }%
+\Phi_{\left\Vert \eta_{2}\right\Vert ,\left\Vert z\right\Vert }%
+\Phi_{\left\Vert \eta_{3}\right\Vert ,\left\Vert x\right\Vert }%
+\Phi_{\left\Vert \eta_{3}\right\Vert ,\left\Vert z\right\Vert };
\]

\item In case III,%
\[
R_{T}=\Phi_{\left\Vert \eta_{1}\right\Vert ,\left\Vert y\right\Vert }%
+\Phi_{\left\Vert \eta_{2}\right\Vert ,\left\Vert y\right\Vert }%
+\Phi_{\left\Vert \eta_{2}\right\Vert ,\left\Vert z\right\Vert }%
+\Phi_{\left\Vert \eta_{3}\right\Vert ,\left\Vert x\right\Vert }%
+\Phi_{\left\Vert \eta_{3}\right\Vert ,\left\Vert z\right\Vert }\text{.}%
\]
\end{enumerate}
Moreover, all of the previous linear representations are irreducible.
\end{lemma}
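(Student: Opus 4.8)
The plan is to exploit two structural properties of the circular transform---its linearity in the integrand and its invariance under orthogonal transformations---together with the characteristic-function decompositions \eqref{eq:char1}, \eqref{eq:char2} and \eqref{eq:char3} already recorded for the three cases. Since $R_S=R_{I_S}$ and $R$ is linear in its argument, each identity of the form $I_T=\sum_i\varepsilon_i I_{T_i}$ with $\varepsilon_i\in\{+1,-1\}$ immediately yields $R_T=\sum_i\varepsilon_i R_{T_i}$. Thus the whole statement follows once I identify each $R_{T_i}$ with the appropriate $\Phi_{a,b}$ and then read off the signs case by case.

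For the identification I would first observe that every basic subtriangle $T_i$ is a right triangle having the origin as one vertex and its right angle at the foot $n_{\cdot\cdot}$ of the corresponding perpendicular: the segment from $n_{\cdot\cdot}$ to the origin is orthogonal to the side, while the segment from $n_{\cdot\cdot}$ to the far vertex lies along that side. Consequently the distances from the origin to the vertices of $T_i$ are $0$, the height $\|n_{\cdot\cdot}\|$, and the length $\|v\|$ of the far vertex $v$, with the right angle opposite $v$. The standard triangle $T(a,b)=\mathrm{conv}((0,0),(a,0),(a,\sqrt{b^2-a^2}))$ has exactly this description with $a=\|n_{\cdot\cdot}\|$ and $b=\|v\|$ (note $a\le b$, since the foot is the closest point of the line to the origin). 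Hence there is an orthogonal map carrying $T_i$ onto $T(\|n_{\cdot\cdot}\|,\|v\|)$---a rotation sending $n_{\cdot\cdot}/\|n_{\cdot\cdot}\|$ to $(1,0)$, followed if necessary by a reflection across the first axis to fix orientation---and by the orthogonal invariance of $R$ we obtain $R_{T_i}=\Phi_{\|n_{\cdot\cdot}\|,\|v\|}$. Substituting the six pairs $(\|\eta_1\|,\|x\|),(\|\eta_1\|,\|y\|),(\|\eta_2\|,\|y\|),(\|\eta_2\|,\|z\|),(\|\eta_3\|,\|z\|),(\|\eta_3\|,\|x\|)$ and attaching the signs from \eqref{eq:char1}--\eqref{eq:char3} produces precisely the three displayed formulas.

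For irreducibility I would argue by the absence of cancellation. The representation is irreducible exactly when the $\ell^1$-norm of its coefficients equals that of the unique basis coefficients, and since $\{\Phi_{a,b}\}$ is linearly independent by Lemma \ref{lem:independent}, two terms $\Phi_{a,b}$ and $\Phi_{a',b'}$ collapse in the basis only when $(a,b)=(a',b')$. In cases I and III all six (resp. five) coefficients equal $+1$; even if some $\Phi$'s coincide, merging equal-signed terms preserves the $\ell^1$-norm, so these representations are automatically irreducible. In case II the only negative coefficient sits on $\Phi_{\|\eta_1\|,\|x\|}$, the transform of $T_1$; a loss of $\ell^1$-norm could occur only if this term cancelled against a positive one, i.e. only if $T_1$ shared its parameter pair with another subtriangle. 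But equal parameters would force $T_1$ congruent to that subtriangle, and by hypothesis $T_1$ is congruent to no other subtriangle in case II; hence by Lemma \ref{lem:independent} the function $\Phi_{\|\eta_1\|,\|x\|}$ is distinct from the five positive terms, its net coefficient stays $-1$, and the two $\ell^1$-norms agree.

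The conceptual content is light; the main obstacle is careful geometric bookkeeping. I must track, in each case and for each subtriangle, which vertex is the foot and which is the far vertex so that $(a,b)$ is assigned correctly, and I must confirm that the orthogonal map always exists (hence the reflection and the inequality $a\le b$). The one genuinely delicate point is in case II: translating the given geometric fact that $T_1$ is congruent to no other subtriangle into the analytic statement that $\Phi_{\|\eta_1\|,\|x\|}$ differs from every other term. This is where Lemma \ref{lem:independent} is essential, since it is precisely what rules out an accidental functional coincidence $\Phi_{a,b}=\Phi_{a',b'}$ with $(a,b)\neq(a',b')$ and thereby certifies that no cancellation is hidden in the passage to the basis.
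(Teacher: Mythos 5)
Your proposal is correct and follows essentially the same route as the paper: linearity of the circular transform applied to the decompositions \eqref{eq:char1}--\eqref{eq:char3}, identification of each $R_{T_i}$ with $\Phi_{\|n\|,\|v\|}$ via orthogonal invariance, positivity of coefficients for irreducibility in cases I and III, and non-congruence of $T_1$ with the other subtriangles in case II. You merely make explicit two steps the paper leaves implicit (the orthogonal map carrying each basic subtriangle onto $T(a,b)$, and the use of Lemma \ref{lem:independent} to rule out accidental coincidences $\Phi_{a,b}=\Phi_{a',b'}$), which is a faithful elaboration rather than a different argument.
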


\begin{proof}
By using the linearity of the circular
transform, all three formulas follow directly from Eqs. (\ref{eq:char1}), (\ref{eq:char2}%
) and (\ref{eq:char3}) respectively. The irreducibility is clear in cases I and III
due to the positivity of the coefficients. On the other hand, to show the irreducibility in case II we must show that the function $\Phi_{\left\Vert \eta_{1}\right\Vert
,\left\Vert x\right\Vert }$ is not in the set
\[
\left\{  \Phi_{\left\Vert
\eta_{1}\right\Vert ,\left\Vert y\right\Vert },\Phi_{\left\Vert \eta
_{2}\right\Vert ,\left\Vert y\right\Vert },\Phi_{\left\Vert \eta
_{2}\right\Vert ,\left\Vert z\right\Vert },\Phi_{\left\Vert \eta
_{3}\right\Vert ,\left\Vert x\right\Vert },\Phi_{\left\Vert \eta
_{3}\right\Vert ,\left\Vert z\right\Vert }\right\} ,
\] Indeed this is true since, in this case, the triangle $T_{1}$ is not congruent to
any other subtriangle.
\end{proof}

\subsection{Recovering the triangle}
\begin{proof}[Proof of Lemma \ref{lem:geo}] From Lemma \ref{lem:span}, $R_{T}$ can be expressed in basis $\left\{  \Phi
_{a,b}\right\}  _{a,b>0}$, say
\begin{equation}
R_{T}=%
{\textstyle\sum}
c_{a,b}\Phi_{a,b}\text{.} \label{eq:rep1}%
\end{equation}
Moreover, depending if $%
{\textstyle\sum}
c_{a,b}$ is equal to $6,4$ or $5$, we can distinguish if we are in case I, II
or III respectively. Notice also that the
representation in eq. (\ref{eq:rep1}) can be expressed as:%
\[
R_{T}=c\Phi_{a_{1},b_{1}}+\Phi_{a_{1},b_{2}}+\Phi_{a_{2},b_{2}}+\Phi
_{a_{2},b_{3}}+\Phi_{a_{3},b_{3}}+\Phi_{a_{3},b_{1}}\text{,}%
\]
for some $a_{i},b_{i}$, $i=1,\ldots,3$ and where $c$ is equal to $1$, $-1$ or $0$,
depending if we are in case I, II or III. We claim that 
$\left(b_{1},a_{1},b_{2},a_{2},b_{3},a_{3}\right)  $ is a parametric representation
of $T$. To see this, notice that in the cases I and II, from the uniqueness
of the linear representation we have that
\[
\left\{
\begin{array}{ccc}
\left(a_{1},b_{1}\right),&\left(a_{1},b_{2}\right), &\left(a_{2},b_{2}\right),\\
\left(  a_{2},b_{3}\right),&\left(a_{3},b_{3}\right),&\left(  a_{3},b_{1}\right)\\
\end{array}
\right\}  =\left\{
\begin{array}{ccc}
\left( \left\Vert n_{xy}\right\Vert ,\left\Vert x\right\Vert \right),&\left(\left\Vert n_{xy}\right\Vert,\left\Vert y\right\Vert \right),&\left(  \left\Vert n_{yz}\right\Vert ,\left\Vert y\right\Vert \right),\\
\left(  \left\Vert n_{yz}\right\Vert ,\left\Vert z\right\Vert \right)
, &\left( \left\Vert n_{xz}\right\Vert ,\left\Vert z\right\Vert \right), &\left(  \left\Vert n_{xz}\right\Vert ,\left\Vert x\right\Vert \right)\\
\end{array}
\right\}  \text{.}%
\]
Then, from Lemma \ref{lem:pairs}, it follows that $\left(  b_{1},a_{1}%
,b_{2},a_{2},b_{3},a_{3}\right)  $ is a parametric representation of $T$. In
case III the claim follows the same argument, by adding the
missing pair $\left(  a_{1},a_{1}\right)  $.
\end{proof}


\begin{thebibliography}{999}

\bibitem {AQ96}\textbf{M. L. Agranovsky, E. T. Quinto}. \textit{Injectivity
sets for the Radon transform over circles and complete systems of radial
functions}. J. Funct. Anal., Vol. 139, pages 383--413, 1996.

\bibitem {AK05}\textbf{G. Ambartsoumian, P. Kuchment}. \textit{On the
injectivity of the circular Radon transform}. Inverse Problems.Vol. 21, 2005.


\bibitem {AG78}\textbf{T.W. Anderson, S.G. Ghurye.} \textit{Identification of
parameters by the distribution of a maximum random variable.} Journal of the
Royal Statistical Society. Series B (Methodological). Vol. 39, No. 3,
337--342. \textbf{1977.}

\bibitem {BG78}\textbf{A. P. Basub, J. K. Ghoshb.} \textit{Identifiability of
the multinormal and other distributions under competing risks model}. Journal
of Multivariate Analysis. Vol. 8, Iss. 3, 413-429. \textbf{1978.}

\bibitem {BG80}\textbf{A. P. Basub, J. K. Ghoshb. }\textit{Identifiability of
distributions under competing risks and complementary risks model.
}Communications in Statistics, Theory and Methods. Vol. 9, No. 4. 1515-1525.
\textbf{1980.}

\bibitem{EH0}\textbf{E. Hashova.}\textit{Asymptotics and bounds for multivariate Gaussian tails.} Journal of Theoretical Probability, 2005, vol. 18, no 1, p. 79-97.

\bibitem{EH01} \textbf{E. Hashorva, J. Husler.} \textit{On multivariate Gaussian tails.} Annals of the Institute of Statistical Mathematics, 2003, vol. 55, no 3, p. 507-522.

\bibitem {DM01}\textbf{M. Dai, A. Mukherjea.} \textit{Identification of the
Parameters of a Multivariate Normal Vector by the Distribution of the
Maximum.} Journal of Theoretical Probability, Vol. 14, N. 1, 267-298.
\textbf{2001.}

\bibitem {DM06}\textbf{L. Bi, A. Mukherjea.} \textit{Identification of parameters and the distribution of the minimum of the tri-variate normal.} Statistics and Probability Letters. Vol. 80, Iss.
23--24. \textbf{2010.}

\bibitem {DM07}\textbf{J. Davis, A. Mukherjea.} \textit{Identification of
parameters by the distribution of the minimum: The tri-variate normal case
with negative correlations.} Journal of Multivariate Analysis. Vol. 98, Iss.
6. \textbf{2007.}

\bibitem {EM99}\textbf{M. Elnaggar, A. Mukherjea.} \textit{Identification of
the parameters of a trivariate normal vector by the distribution of the
minimum.} Journal of statistical planning and inference. Vol. 78.
\textbf{1999.}

\bibitem {MS90}\textbf{A. Mukherjea, R. Stephens. }\textit{The problem of
identification of parameters by the distribution of the maximum random
variable: solution for the trivariate normal case.} Vol. 34. \textbf{1990.}

\bibitem{R92}\textbf{Rao, B. P. } \textit{Identifiability in Stochastic Models} Acad. Press, Boston. \textbf{1992}

\bibitem {T75}\textbf{A. Tsiatis.} \textit{A nonidentifiability aspect of the
problem of competing risks.} Proceedings of the National Academy of Sciences,
Vol. 72, No. 1, 20-22. \textbf{1975.}

\bibitem {V88}\textbf{S. Varhadan.} \textit{Large deviations and
applications}, Lecture Notes in Mathematics. Vol. 1362, 1-49. \textbf{1988.}

\end{thebibliography}
\end{document}